\let\footnote=\endnote
\newcommand{\beql}[1]{\begin{equation}\label{#1}}
\newcommand{\eeql}{\end{equation}}
\newcommand{\eqn}[1]{(\ref{#1})}
\newcommand{\E}{\mathbb{E}}
\newcommand{\pr}{\mathbb{P}}
\newcommand{\mi}{\mathbb{I}}
\newcommand{\mz}{\mathbb{Z}}
\newcommand{\cc}{{\cal C}}
\newtheorem{thm}{Theorem}
\newtheorem{lem}[thm]{Lemma}
\newtheorem{prop}[thm]{Proposition}
\newtheorem{cor}[thm]{Corollary}
\def\newblock{\ }%
\begin{document}


\RUNAUTHOR{Stolyar and Wang}

\RUNTITLE{Exploiting Random Lead Times for Significant Inventory Cost Savings}

\TITLE{Exploiting Random Lead Times for Significant Inventory Cost Savings}

\ARTICLEAUTHORS{%
	\AUTHOR{Alexander L. Stolyar }
	\AFF{Department of ISE and Coordinated Science Lab \\ University of Illinois at Urbana-Champaign, Urbana, IL 61801, \EMAIL{stolyar@illinois.edu}}
	\AUTHOR{Qiong Wang}
	\AFF{Department of ISE, University of Illinois at Urbana-Champaign, Urbana, IL 61801, \EMAIL{qwang04@illinois.edu}}
} 

\ABSTRACT{We study the classical single-item inventory system in which unsatisfied demands are backlogged. Replenishment lead times are random, independent identically distributed, causing orders to cross in time. We develop a new inventory policy to exploit implications of  lead time randomness and order crossover, and evaluate its performance by asymptotic analysis and simulations.  Our policy does not follow the basic principle of Constant Base Stock (CBS) policy, or more generally, $(s,S)$ and $(R,q)$ policies, which is to keep the inventory position within a fixed range. Instead, it uses the current inventory level (= inventory-on-hand minus backlog) to set a dynamic target for inventory in-transit, and place orders to follow this target. Our policy includes CBS policy as a special case, under a particular choice of a policy parameter. 
	
We show that our policy can significantly reduce  the average inventory cost compared with CBS policy. Specifically, we prove that if the lead time is exponentially distributed,  then under our policy, with properly chosen policy parameters, the expected (absolute) inventory level scales as $o(\sqrt{r})$, as the demand rate $r\to \infty$. In comparison, it is known to scale as $\Theta(\sqrt{r})$ under CBS policy.  In particular, this means that, as $r\to\infty$, the average inventory cost under our policy vanishes in comparison with that under CBS policy.  Furthermore, our simulations show that the advantage of our policy remains to be substantial under non-exponential lead time distributions, and may even be greater than under exponential distribution. We also use simulations to compare the average cost under our policy with that achieved under an optimal policy for some cases where computing the optimal cost is tractable. The results show that our policy removes a majority of excess costs of CBS policy over the minimum cost, leading to much smaller optimality gaps. 
}

\KEYWORDS{Inventory Management, Random Lead Times, Base Stock Policies, Asymptotic Analysis.} 
\maketitle

\section{Introduction}
\label{sec:intro}

We consider the classical single-item inventory system. Demand units arrive as a constant rate $r$ Poisson process. Inventory is managed by a continuous-time replenishment policy. Unsatisfied demand units are backlogged and unused supply units are held in inventory. There is no capacity constraint nor fixed ordering cost. The inventory is replenished by ordering supply units, which are received after a generally random lead time. 
The task of inventory management is to determine the timing and quantity of supply orders based on the current state and history of the system. An inventory control policy performance measure/objective may depend on a specific application. In this paper we focus
on the long-run average inventory cost, where each unit in inventory incurs a holding cost at rate $h$ and each unit of unsatisfied demand incurs a backlog cost at rate $\theta$.

As a fundamental result in the inventory theory, if the lead time is constant, then it is always optimal to follow Constant Base Stock (CBS) policy, which keeps the inventory position at a certain target level (\cite{Karlin1958}).
Here the inventory position is the total inventory (including both units on-hand and units in-transit) minus the backlog. The target level is obtained by solving an optimization problem that uses demand process structure and cost parameters as inputs. The optimality of CBS policy also extends to any system in which orders never cross in time, i.e., orders placed earlier also arrive earlier. 
However, this order-no-crossover condition is often not satisfied in practice. For instance, Zipkin gives a taxonomy of supply systems, many of which will lead to random lead times and order crossovers (\cite{Zipkin2000}).  \cite{Robinson2001} and \cite{Robinson2008} present a variety of factors, ranging from production scheduling, parallel order processing to supplier diversity, geography, and transportation, that can cause later orders to arrive earlier. \cite{Disney2016} perform an empirical analysis on port-to-port and door-to-door shipping times between many China-USA city pairs.  By their estimation, crossover can occur to as many as $40\%$ of orders. 

In systems with random lead times and order crossovers (RLT/OC), CBS policy is generally not optimal (\cite{Disney2016}), and cannot be expected to be optimal. CBS optimality is an ``artifact'' of the lead times being deterministic. 
If we compare a system with RLT/OC to the corresponding system with deterministic lead times, while keeping the mean lead times equal in both systems, then in the former system there is a chance that a new ordered item will arrive much earlier than in the latter system. 
Such a feature of RLT/OC can -- at least potentially -- be exploited, making CBS sub-optimal. 
This, however, poses the question: how much better could well-designed ``non-CBS'' policies be compared with CBS policy, in 
the RLT/OC case?
In this paper we show that non-CBS policies can be potentially infinitely better. We propose a new policy, Generalized Base Stock (GBS) policy, which achieves such infinite improvement asymptotically as the demand rate $r\to\infty$. 
Specifically, {\bf our main theoretical results} prove that if the lead time is exponentially distributed, 
 then under GBS policy with properly chosen parameters, the expected (absolute) inventory level scales as $o(\sqrt{r})$. In comparison, the inventory level is known to scale as $\Theta(\sqrt{r})$ under CBS policy. 
In particular, this implies that, as $r\to\infty$, the average inventory cost under GBS policy vanishes in comparison to that under CBS policy.

The key feature of GBS policy is that it is {\em not} focused on maintaining the inventory position at a constant level or even within a fixed range.
In this sense it is not ``inventory position based.'' Instead, the policy uses the current inventory level (= inventory-on-hand minus backlog) to set a dynamic target for inventory in-transit, and place orders to follow this target as closely as possible. This in-transit target always moves in the direction opposite to the changes of the current inventory level, and the
policy has a ``gain'' parameter $\gamma>0$ which controls the rate of this movement. For example, when $\gamma=3$, a decrease [resp., increase] of the current inventory by $1$ results in the increase [resp., decrease] of the in-transit target by $3$. Informally speaking, since 
the in-transit inventory determines the rate at which inventory arrives into the system, the larger the $\gamma$ the larger the ``drift'' of the current inventory to $0$. When parameter $\gamma=1$, GBS policy specializes to CBS policy. (Hence the name {\em Generalized} Base Stock.) 
However, as will be evident from our results, it is typically beneficial to have $\gamma$ greater than $1$, and significantly so in many cases.

In addition to theoretical results, we evaluate GBS performance via simulations. Our {\bf simulation results} indicate that the advantage of GBS policy over CBS policy prevails in many cases that are outside the scope of our asymptotic analysis. In particular, we consider situations in which 1) the demand rate is small and/or 2) the lead time distribution is non-exponential. 
For the latter, we consider the following lead time distributions: 2.a) the sum of deterministic and exponential; 2.b) uniform; 2.c) Pareto.
We show that in all these cases, our GBS policy strictly outperforms CBS policy. In some cases, savings of the inventory cost can be as high as $50\%-60\%$. In systems where lead times are exponentially distributed and demand rates are not high, computing the optimal policy and cost is tractable by viewing our model as a Markov Decision Process (MDP). We compare costs of GBS policy with the minimum costs, find that the optimality gaps are small in these cases, and discuss intuitions of this outcome. 

The rest of the paper is organized as follows. In Section~\ref{sec-related-work} we review related literature. 
Our formal model, the GBS policy definition and the main theoretical results (Theorem~\ref{thm-main} and Corollary~\ref{cor1}) are presented 
in Section~ \ref{sec:model}.  The proof of Theorem~\ref{thm-main} is given in Section~\ref{sec-proof-main}.
We present our simulation experiments in Section \ref{sec:numerical} and conduct a cost comparison between GBS and optimal policies in Section \ref{sec:costcomp}. In Section~\ref{sec:extend} we discuss potential generalizations of our main theoretical results.
We conclude the paper in Section \ref{sec:conclusion}. 

{\bf Basic notation.} We denote by $\mz$ and $\mz_+$ the sets of integers and non-negative integers, respectively.
For real numbers $a$ and $b$: $a\vee b \doteq \max\{a,b\}$,  $a\wedge b \doteq \min\{a,b\}$, 
$a^+ \doteq a \vee 0$,  $a^- \doteq (-a)^+=-(a\wedge 0)$, $\lceil a \rceil$ is the smallest integer $i\ge a$. 
Indicator $\mi\{A\}$ of an event (or condition) $A$ is equal to $1$ if $A$ holds, and $0$ otherwise.
For a number sequence $C^r$, $r \rightarrow \infty$, and a given number $k>0$, we say that the sequence is (or, scales as): $o(r^k)$ if $|C^r| / r^k \to 0$; 
$\Theta(r^k)$ if $0 < \liminf |C^r| / r^k \le \limsup |C^r| / r^k < \infty$.
Convergence of random variables in distribution is denoted by $\Rightarrow$; 
$\mathcal N(m,\sigma^2)$ denotes a normal random variable with mean $m$ and variance $\sigma^2$; $\Phi(x), ~ -\infty < x < \infty$, is the distribution function of the standard normal variable $\mathcal N(0,1)$; the stochastic order $X_1 \le_{st} X_2$ between random variables $X_1$ and $X_2$ means that $\pr\{X_1 > x\} \le \pr\{X_2 > x\}$ for all $x$.
If $X(t)$, is a random process, in continuous or discrete time $t$, for which a stationary regime exists, we denote by $X(\infty)$ the (random) value of $X(t)$ in  this stationary regime. (Our main theoretical result concerns 
an irreducible countable continuous-time Markov chain, in which case the existence of unique stationary regime is equivalent to positive recurrence, or ergodicity.)

\section{Related Work}
\label{sec-related-work}

The need to address random lead times and order crossover is well-recognized in the inventory theory.
Nevertheless, to the best of our knowledge, before this work, almost all studies only consider policies that makes decisions based on the inventory position alone; and none of the previous work shows that $o(\sqrt{r})$ inventory cost scaling can be achieved.

There have been studies that focus on sequential systems in which the lead times are random but orders never cross in time; see \cite{Song1996a}.  \cite{Zipkin1986} develops an explicit order arrival mechanism that gives rise to such systems. As discussed in \cite{Zipkin2000}, there are many practical cases that correspond to sequential systems, e.g., cases when orders are processed FIFO by a single server with finite capacity. Analysis of sequential systems is facilitated by the fact that many results on the constant-leadtime systems continue to apply when orders always arrive in the same sequence as they are placed.  

There also have been studies that consider systems with i.i.d. replenishment lead times, in which case order crossover is inevitable in general. The prevailing approach is to analyze and/or optimize the system under the framework of base stock policies (e.g., \cite{Zalkind1978}, \cite{Robinson2001}, \cite{Bradley2005}, \cite{Robinson2008}, \cite{Muharremoglu2010}), or more generally, $(s,S)$ (e.g., \cite{Bashyam1998}, \cite{Beckmann1987}), or $(R,q)$ (e.g., \cite{He1998}, \cite{Song1996b}, \cite{Ang2017}) policies. Improvements are made by developing better ways to set policy parameters. Nevertheless, once these parameters have been determined, order arrivals depend only on the demand process and random lead times, and insulated from active, state-dependent adjustments. This makes the analysis more tractable, but also deprives the system of significant cost savings that such adjustments can bring about. In fact, \cite{Robinson2001} (p.178) present a very simple discrete-time example with order crossover to show that it is possible to save more than $40\%$ of inventory cost by replacing CBS policy with a policy that sets order quantities based on the inventory level. Nevertheless, their discussion does not go beyond a simple illustration that CBS policy is not always optimal, let alone a prescription of a general policy and a full-scale analysis of its effectiveness.

\cite{Disney2016} point out that when orders can cross in time, CBS policy does not minimize the variance of the net inventory level, which in turn determines the average inventory cost. They prescribe a Proportional Order-Up-To (POUT) policy and show that it results in a lower inventory cost compared with CBS policy.  Nevertheless, their order decisions are still based solely on the inventory position. This should explain why the improvement of their policy from CBS policy, which is less than $1\%$ (see Figure 8 and related discussions on page 482 in \cite{Disney2016}), pales in comparison with improvements achieved by our approach (see results in Section \ref{sec:numerical} below). 

The model studied in this paper has several other applications besides inventory systems. For example, it arises in 
modern call/contact centers \cite{PaSt2014_agents_inv}, in which case the ``demand units'' are callers and ``supply units'' are agents answering the calls; the agents are not always available, they need to be invited and respond after random delays, which are the ``lead times.'' The objective in such systems is to minimize waiting times for both callers and agents (minimize ``inventory'').
The theoretical results of \cite{PaSt2014_agents_inv} analyze an agent-invitation (``inventory control'') policy, which is different from GBS policy in two important respects. First, the underlying dynamics of the process is different and, second, the policy in \cite{PaSt2014_agents_inv} allows the option of un-inviting pending agents (those who were invited earlier, but have not responded yet) at any time. The latter option corresponds to discarding in-transit inventory at any time without penalty -- this option usually is {\em not available} in inventory systems, and GBS policy does {\em not} utilize it. Other applications (or potential applications) of the model in this paper include telemedicine, crowdsourcing-based customer service, taxi-service system, buyers and sellers in a trading market, and assembly systems; see 
\cite{PaSt2014_agents_inv} for additional discussion and references.

We note that, methodologically, our theoretical results are closely related to diffusion limit/approximation results in queueing theory; see, e.g., \cite{Pang@2007} for an extensive review. The main technical challenge of our analysis stems from the above-mentioned fact that in-transit inventory cannot be freely discarded, which is a major consideration in developing inventory policies.
Consequently, 
to prove the diffusion limit for the actual process we, first, obtain the diffusion limit for an artificial process 
(which can discard in-transit inventory)
and then -- this is the key part -- 
derive bounds on the deviation of the actual process from the artificial one.

Finally, we note that there are different artificial processes in the demand learning literature on the lost sales model under CBS policy (see, e.g., \cite{Huh2009}, \cite{Zhang2019}). In these systems, the base stock level is updated with new demand data and converges to the optimal level under a learning algorithm. Because inventory cannot be discarded freely, the inventory position may not reach the new base stock level instantly when its value is reduced by the learning algorithm. An artificial process is introduced to separate the cost caused by the latter factor and that by the error in the estimated base stock level. Under this context,  the artificial process converges to the original one as time goes to infinity, and the  focus is on the convergence rate of actual and artificial inventory positions during a transient learning period. In our setting,  the artificial process is different: it defines a parallel system to the original one, each has its own target process that evolves over time, and the artificial process does not converge to the 
original one as time goes to infinity. We bound the gap of steady-state expected inventory levels between the two systems, based on different technical analysis.

\section{Model and main results}
\label{sec:model}

\subsection{Model}
\label{subsec:notation}

Following the general description at the beginning of Section \ref{sec:intro}, we formally define the model as follows.

The demand, consisting of discrete units, follows a Poisson process of rate $r$. 
Inventory is managed by a continuous-review replenishment policy. 
An ordered supply unit arrives after a lead time which has the distribution function $F(\cdot)$ with mean $1/\beta$; the lead times are i.i.d.; we denote by $L$ a generic lead time, $\mathbb E[L]=1/\beta$. 

We will use the following notation, for $t\ge 0$: $Y(t)$ is the net inventory level (positive or negative) at time $t$;  
$\Lambda(t)$ is the number of demand units arrived in $(0,t]$;
$Z(t)$ is the number of in-transit (ordered, but not arrived yet) supply units  at $t$; 
$M(t)$ is the number of supply units that arrived in $(0,t]$ (i.e., moved from in-transit to actual inventory);
$\mathcal S(t)$ is the number of supply units ordered in $(0,t]$. Then,
\[
Z(t) \equiv Z(0) + \mathcal S(t) - M(t),~~~~t \ge 0,
\]
\[
Y(t) \equiv Y(0) + M(t)- \Lambda(t),~~~~t \ge 0;
\]
and $B(t)=Y^-(t)$ and $I(t) = Y^+(t)$ are the backlog and inventory on-hand levels, respectively, at $t\ge 0$.

An inventory control policy defines $\mathcal S(t)$ as a (non-anticipating) function of the process history up to time $t$.
Performance measures and/or objectives of a control policy may depend on the application.
One of the most common performance measures is the long-run average inventory cost, defined as follows.
Let $h>0$ and $\theta>0$ be the per-unit inventory holding and backlog costs per unit of time, respectively.
Then the expected inventory cost at time $t$ is given by 
\[
\cc(t)=h \mathbb EI(t)+\theta \mathbb EB(t),~~t \ge 0. 
\]
The long-run average cost is then 
\begin{equation}
\label{eq:costobj}
\cc \doteq \lim_{\mathcal T \rightarrow \infty} \frac{1}{\mathcal T} \int_0^{\mathcal T} \cc(t) dt, 
\end{equation}
assuming the limit (finite or infinite) exists. Our main theoretical results concern a setting and a specific policy, under which the system process is an irreducible countable continuous-time Markov chain. In this case the limit in \eqn{eq:costobj} always exists,  
and if the process is positive recurrent (ergodic), 
\begin{equation}
\label{eq:costobj-2}
\cc=h \mathbb EI(\infty)+\theta \mathbb EB(\infty) \equiv  h \mathbb EY^+(\infty)+\theta \mathbb EY^-(\infty).
\end{equation}
Note that in the special case when the holding and backlog cost rates are equal, $h=\theta$,
\begin{equation}
\label{eq:costobj-sim}
\cc= h \mathbb E|Y(\infty)|.
\end{equation}

In this paper, we will use the long-run average cost to evaluate the performance of our proposed policy and to compare it with CBS policy.

\subsection{Generalized base-stock (GBS) Policy}
\label{subsec:policy}

The theoretical results of this paper concern a policy that we introduce in this subsection.
(This policy is a modified version of a scheme proposed in \cite{stolyar2010pacing}.)

To make our terminology more concise and convenient for the policy definition and analysis, 
we will call a demand unit a {\em customer}, and a supply unit an {\em item}.

Before giving the formal definition, we now informally describe the key ideas and features of the GBS policy. The purpose of this discussion is to provide the intuition for the policy ``mechanics,'' the meaning of variables and parameters, before they are defined formally. (The discussion of the values of the parameters is mostly postponed until after the formal definition.)

 The key idea of the GBS policy is as follows. Recall from \eqn{eq:costobj-2} and \eqn{eq:costobj-sim} that minimizing the expected inventory cost is closely related to minimizing the expected absolute inventory level, $\E |Y|$. (When $h=\theta$, these two objectives are equivalent.) To fix the ideas, suppose first that keeping $|Y|$ close to $0$ is the objective. 
The GBS policy {\em attempts} to give $Y$ a drift towards $0$. Note that when in-transit (ordered but not arrived yet) inventory level $Z$ is equal $X_* \doteq r/\beta$,  the drift of $Y$ is zero, because in this case the arrival rate $r$ of customers is matched by the arrival rate $\beta Z = \beta X_* =r$. If $Z$, depending on the current value of $Y$, is set to $X_* - \gamma Y$, where $\gamma>0$ is the ``gain'' parameter, then $Y$ has the drift $-\beta \gamma Y$ towards $0$. 
For the purposes of this discussion, assume that $\gamma\ge 1$ and it is an integer. (It will follow from the main results of this paper that it is typically beneficial to have $\gamma>1$. This is not surprising, since increasing the value of $\gamma$, generally speaking, ``amplifies'' 
the drift of $Y$ towards zero.
See also the remark immediately after the formal definition of GBS.)
Thus, GBS policy tries to maintain the relation $Z=X_* - \gamma Y$ between $Z$ and $Y$. Note, however, that maintaining this relation exactly at all times is not possible when $\gamma>1$.
Indeed, say $\gamma =3$. When one of the pending (in-transit) items arrives into the system,
$Y$ increases  by $1$, while $Z$ decreases by $1$. However, to maintain relation $Z=X_* - \gamma Y$, $Z$ should have decreased by $\gamma$, not $1$, and we {\em cannot dismiss the pending items} to ``enforce'' the relation. As a result, at all times, GBS maintains the ``target'' 
$X=X_* - \gamma Y$ for the in-transit inventory $Z$. The ``gap'' between $Z$ and its target $X$ can be immediately eliminated if $Z < X$, simply by ordering new items. If $Z>X$ the gap cannot be immediately eliminated. Therefore, at all times, the gap $Z-X \ge 0$.

Another feature of the policy is the option of centering $Y$ not necessarily at $0$, but at some value $x_*$. 
This is useful, for example, when the objective is the average cost minimization, with holding and backlog cost rates being different, $h\ne \theta$. (We will use this in Section \ref{sec:numerical}, see specifically (\ref{eq:basexx}).)
If we do want to ``drive'' $Y$ towards $x_*$, then the GBS policy naturally generalizes, so that the target 
$X=X_* - \gamma (Y-x_*)$; this is equivalent to $X=X_{**} - \gamma Y$, where $X_{**}\doteq  X_* +\gamma x_* = r/\beta +\gamma x_*$.

Finally, clearly, the target value for $Z$ cannot be less than $0$. Also, to simplify the analysis, we will prohibit the target to go above the level $X_*+f$, where $f>0$ is parameter. That is why, in addition to ``target'' $X$ the algorithm has the ``truncated target'' variable $T$, which is simply $X$ restricted to the interval $[0,X_*+f]$.

Further discussion of the GBS policy and its parameters will follow its formal definition.

{\bf Generalized base-stock (GBS) policy.} 

The policy has three parameters: $\gamma>0$, $f>0$, and $x_*$. Denote $X_* \doteq r/\beta$ and 
$X_{**} \doteq r/\beta + \gamma x_* = X_* + \gamma x_*$.
The policy maintains a ``target'' variable $X$, which is just a function of $Y$: 
\beql{eq-func1}
X=X_*-\gamma (Y-x_*) \equiv X_{**} -\gamma Y;
\end{equation}
and also the ``truncated target'' (also just a function of $Y$):
\beql{eq-func2}
T = [X \wedge (X_*+f)] \vee 0.
\end{equation}

The policy ``acts'' only at the ``arrival times'', i.e. times  of either customer or items arrivals into the system.
At an arrival time,  the following steps are performed, in the specified sequence.

1. If it is a customer arrival, then $Y:=Y-1$. \\
 If it is an item arrival, then $Y:=Y+1$ and $Z:=Z-1$. 

2. Update $T$ via \eqn{eq-func1} and \eqn{eq-func2}. 

3. Order $A= \lceil T-Z \rceil \vee 0$ new items: $Z:=Z+A$.

{\bf End of GBS policy definition}

 {\em Remark.}  As discussed above, the underlying idea of GBS policy is to keep the inventory in-transit $Z$ close to the target $X_{**}  - \gamma Y$ (which depends on $Y$),
in other words to maintain $Z+\gamma Y \approx X_{**}$. 
This means that {\em in the special case $\gamma=1$}, the policy tries to keep the inventory position $Y+Z$ close to the constant base-stock level $X_{**} =X_* + \gamma x_*$. Therefore, when $\gamma=1$ GBS policy essentially reduces to CBS policy -- this explains the name {\em generalized base-stock}. GBS ``gain'' parameter $\gamma$ is the key. When $\gamma > 1$, the GBS ``response'' to the current value of $Y$ is ``amplified'' compared with that under CBS policy in that relation $Z-X_{**} \approx - Y$ is replaced by $Z-X_{**} \approx - \gamma Y$. We will further elaborate on this, in particular on the role of parameter $\gamma$, in Sections
\ref{sec-key-diff} and \ref{sec-main-res}. 

{\em Remark.} As explained above, parameter $x_*$ serves to recenter $Y$ at a desired value, which is useful, for example, when the objective is the average cost minimization, with $h\ne \theta$. In the analysis of GBS policy, it is sufficient to consider the case where $x_*=0$ (that is $X_{**} =X_* = r/\beta$), without loss of generality; see the first paragraph of Section~\ref{sec-proof-main}.

{\em Remark.} As mentioned above, parameter $f$, and the upper bound $X_*+f$ for the truncated target $T$, are introduced to simplify the analysis. A practical version of GBS policy can work without this upper truncation, namely with $T = X  \vee 0$. Our simulations show that the behavior and performance 
of these two policy versions is essentially indistinguishable when $f/\sqrt{r}$ is large -- which is the regime we are mostly interested in.
We note, however, that the version of GBS policy that we analyze, namely with $T = [X \wedge (X_*+f)] \vee 0$, requires {\em no more} information about the system parameters than the simpler version with $T = X  \vee 0$.


{\em Remark.} In Section \ref{sec:numerical}, we will discuss the choice of $\gamma$ and $x_*$ for implementing GBS policy. We will also illustrate impacts of their values by numerical examples in the E-Companion.

\subsection{Discussion of key difference between GBS and existing inventory policies}
\label{sec-key-diff} 

The discussion in this subsection is informal -- the reader interested only in the formal results' statements and proofs can skip this subsection.

To gain insight into the basic system dynamics under GBS policy, and its key difference from existing policies,
let us consider a system in continuous  time, with ``randomness removed:'' constant-rate $r$ non-random ``fluid'' customer input flow and 
continuous flow of items with instantaneous rate  is equal to $\beta Z$. We will assume $x_*=0$, so that $X_{**}=X_*=r/\beta$. (This is without loss of generality;  see the first paragraph of Section~\ref{sec-proof-main}.)
Let us also use notation $U=Z-X_*$.
Then, the system dynamics under GBS  policy is described by the following ODE and the conservation law:
\beql{eq-gbs-ode-0}
Y' = \beta U, ~~~~ \gamma Y+U\equiv 0, 
\end{equation}
or, equivalently,
\beql{eq-gbs-ode}
Y' = - \beta \gamma Y, ~~~~~ U=-\gamma Y. 
\end{equation}
The dynamics under CBS policy is a special case of \eqn{eq-gbs-ode} with $\gamma=1$, namely
\beql{eq-base-stock-ode}
Y' = - \beta Y, ~~~~~ U= - Y. 
\end{equation}
We see that GBS policy ``amplifies'' by factor $\gamma$ the drift of the inventory level $Y$ towards $0$, compared with CBS policy.

A continuous time version of the Proportional Order-Up-To (POUT) policy, introduced in \cite{Disney2016}, also with ``randomness removed,'' would be described
(in notations of the present paper) by the following ODE:
\beql{eq-pout-ode}
Y' = \beta U, ~~~~~ (Y+U)' = -\delta (Y+U).
\end{equation}
(Here the parameter $\delta>0$ is related to parameter $0<\beta\le 1$ in \cite{Disney2016} via $e^{-\tau \delta} = 1-\beta$, where $\tau$ and $\beta$ are their notations, not related to definitions in this paper). 
The CBS policy dynamics \eqn{eq-base-stock-ode} is then a special case of POUT with $\delta=+\infty$. 
We observe the following. POUT policy generalizes CBS policy in that it introduces some ``inertia'' into the dynamics of the {\em inventory position} 
$Y+Z = (Y+U) + X_* $, as it drives it to the base-stock level $X_*$. (CBS policy keeps the inventory position exactly at the base-stock level $X_*$ at all times.) As a result, when/if the inventory position is equal/close to $X_*$, the inventory level $Y$ has a drift ($-\beta Y$) towards $0$, which is proportional to $|Y|$, but is {\em not} controlled by any policy parameter; in other words, when the inventory position is close to $X_*$, the dynamics under POUT poliy
is same as that under CBS policy.
In contrast, as discussed above, under GBS policy the drift ($-\gamma \beta Y$)
of the inventory level to $0$ is ``amplified'' by the policy parameter $\gamma$, which can and, as we will see, should be greater than $1$.
We see that GBS policy, unlike CBS and POUT policies, directly controls the inventory level to {\em reduce its variance} and, consequently, cost.

\subsection{Main results}
\label{sec-main-res}

We consider a sequence of systems with increasing parameter $r$. The policy parameter $f$ depends on $r$, $f=f(r)$, where 
$f(r)$ is a fixed function such that 
\beql{eq-f-growth}
\frac{f(r)}{r} \to 0, ~~~ \frac{f(r)}{\sqrt{r}} \to \infty.
\eeql
Parameter $x_*$ also in general depends on $r$, $x_*=x_*(r)$; this dependence is arbitrary.
The variables, pertaining to the system with parameter $r$ will be supplied a superscript $r$: $Y^r, X^r, T^r, Z^r, X_*^r$.

\begin{thm}
\label{thm-main}
Consider a fixed integer $\gamma \ge 1$. Suppose the lead time distribution is exponential (with mean $1/\beta$).
Then, under  GBS policy, the process $(Y^r(t),Z^r(t)), ~t\ge 0$, is an irreducible continuous-time countable Markov chain, and for any sufficiently large $r$ it is positive recurrent (stochastically stable), and therefore has unique stationary distribution.
The following convergence holds:
\beql{eq-main-conv}
\frac{Y^r(\infty)-x_*}{\sqrt{r}} \Rightarrow \mathcal N(0,(\beta\gamma)^{-1}).
\end{equation}
Moreover, the expectations of $[(Y^r(\infty)-x_*)/\sqrt{r}]^+$ and $[(Y^r(\infty)-x_*)/\sqrt{r}]^-$ converge to the corresponding expectations for the $\mathcal N(0,(\beta\gamma)^{-1})$:
$$
\E \left[\frac{Y^r(\infty)-x_*}{\sqrt{r}}\right]^+ \to (2\pi\beta\gamma)^{-1/2}, ~~~ \E \left[\frac{Y^r(\infty)-x_*}{\sqrt{r}}\right]^- \to (2\pi\beta\gamma)^{-1/2}.
$$
\end{thm}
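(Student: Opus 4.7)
The plan is to begin by reducing to $x_*=0$ (so $X_{**}^r=X_*^r=r/\beta$) using the translation invariance promised in the paper. The key device is then to introduce an \emph{artificial} companion process $(\tilde Y^r,\tilde Z^r)$ in which in-transit items may be freely discarded whenever $\tilde T^r$ decreases, so that $\tilde Z^r\equiv \tilde T^r$ at all times. Under the exponential lead-time assumption, the memoryless property makes $\tilde Y^r$ a scalar birth-death Markov chain on $\mathbb Z$ with customer-arrival (down) rate $r$ and item-arrival (up) rate $\beta\tilde T^r(\tilde Y^r)$, which equals $r-\beta\gamma y$ in the interior range $-f/\gamma<y<r/(\beta\gamma)$. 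Irreducibility of $(Y^r,Z^r)$ is immediate; positive recurrence for large $r$ follows from a Foster--Lyapunov argument with $y\mapsto y^2$, using the two truncations of $T$ to supply uniformly strong restoring drift once $|Y^r|$ exits a bounded set.

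For the artificial chain, detailed balance gives $\tilde\pi^r_{y+1}/\tilde\pi^r_y=1-\beta\gamma y/r$, and a Taylor expansion of $\log(1-\beta\gamma j/r)$ along $y=\lfloor c\sqrt r\rfloor$ yields $\log(\tilde\pi^r_y/\tilde\pi^r_0)\to -\beta\gamma c^2/2$. The hypothesis $f(r)/\sqrt r\to\infty$ pushes the truncation barriers to infinity on the diffusive scale so they do not contaminate the Gaussian shape, while $f(r)/r\to 0$ keeps $X_*+f$ of order $r$, giving the sub-Gaussian tail bounds needed for tightness and uniform integrability. Combined, this produces $\tilde Y^r(\infty)/\sqrt r\Rightarrow\mathcal N(0,(\beta\gamma)^{-1})$ together with convergence of the half-moments to $(2\pi\beta\gamma)^{-1/2}$.

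The main obstacle -- ``the key part'' flagged in Section~\ref{sec-related-work} -- is to transfer the limit from $\tilde Y^r$ to $Y^r$. In the actual system in-transit items cannot be dismissed, so the gap $V^r\doteq Z^r-T^r\ge 0$ is in general positive and the item-arrival rate $\beta(T^r+V^r)$ exceeds the artificial one by $\beta V^r$. I plan to analyze $V^r$ as a standalone process: each customer arrival maps $V^r\mapsto(V^r-\gamma)^+$ after the ordering step, while each item arrival increments $V^r$ by $\gamma-1$. When $T^r$ is in its interior, a short calculation gives the drift of $V^r$ as
\[
\beta(\gamma-1)(T^r+V^r)-r\gamma \;=\; -r + \beta(\gamma-1)\bigl[V^r-\gamma Y^r\bigr],
\]
so for $V^r$ and $|Y^r|$ of order at most $\sqrt r$ the drift is $-r(1+o(1))$. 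An exponential Lyapunov function $v\mapsto e^{cv}$ with small $c>0$ then bounds $\E e^{cV^r(\infty)}$ uniformly in $r$, giving in particular $\E V^r(\infty)=O(1)$ with sub-exponential tails.

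With $V^r$ controlled, I would close the argument by generator comparison. For a smooth, compactly supported $h$ and $g^r(y)\doteq h(y/\sqrt r)$, a Taylor expansion yields, at any state $(y,z)$ with $z=T^r(y)+v$ in the interior,
\[
\mathcal L^r g^r(y,z) \;=\; \Bigl[-\beta\gamma\,\tfrac{y}{\sqrt r}\,h'\!\bigl(\tfrac{y}{\sqrt r}\bigr) + h''\!\bigl(\tfrac{y}{\sqrt r}\bigr)\Bigr] + \tfrac{\beta v}{\sqrt r}\,h'\!\bigl(\tfrac{y}{\sqrt r}\bigr) + O(r^{-1/2}),
\]
whose first bracket is the Ornstein--Uhlenbeck generator $\mathcal L^{\mathrm{OU}}h$ with drift $-\beta\gamma\hat y$ and diffusion $\sqrt 2$, and whose remainder term has $\pi^r$-expectation $O(\E V^r/\sqrt r)=O(r^{-1/2})$. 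Stationarity $\E_{\pi^r}[\mathcal L^r g^r]=0$ then forces $\E_{\pi^r}[\mathcal L^{\mathrm{OU}} h(Y^r/\sqrt r)]\to 0$; together with tightness of $Y^r/\sqrt r$ (from the $y^2$-Lyapunov, the bound on $\E V^r$, and Cauchy--Schwarz) and uniqueness of the OU invariant measure, this yields $(Y^r(\infty)-x_*)/\sqrt r\Rightarrow\mathcal N(0,(\beta\gamma)^{-1})$. Convergence of the one-sided expectations then follows from uniform integrability supplied by the same Lyapunov bounds.
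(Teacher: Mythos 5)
Your proposal is correct in outline and its key computations check out, but it takes a genuinely different route from the paper at the crucial transfer step. The two arguments share the reduction to $x_*=0$, the artificial (freely-discarding) birth-and-death process with detailed-balance ratio $1-\beta\gamma y/r$ and its Gaussian limit, and an $O(1)$ bound on the expected order gap. For the gap, you run an exponential Lyapunov function on $V^r=Z^r-T^r$; the paper instead works with $\tilde D^r=Z^r-\lceil T^r\wedge X^r\rceil$ (whose dynamics are uniform across the lower truncation: down by $\gamma$ reflected at $0$ on customer arrivals, up by at most $\gamma-1$ on item arrivals) and stochastically dominates its embedded chain by an $r$-independent reflected random walk with one-step drift $(1/2+\epsilon)(\gamma-1)-(1/2-\epsilon)\gamma<0$. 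These are essentially equivalent; note only that your drift bound is better stated via $Z^r\le X_*^r+f(r)$, which makes it uniformly $-r(1-o(1))$ over the entire interior rather than just for $V^r,|Y^r|=O(\sqrt r)$, and that in the two truncated regimes $V^r$ is non-increasing, so they do no harm. Where you genuinely depart is the transfer from the artificial to the actual process: you close via a Stein/generator (basic adjoint relation) comparison of $\E_{\pi^r}\mathcal L^r h(Y^r/\sqrt r)$ with the Ornstein--Uhlenbeck generator, absorbing the gap into a $\beta V^r h'/\sqrt r$ perturbation, plus tightness from a quadratic Lyapunov function and uniqueness of the OU invariant measure. The paper instead combines three bespoke facts: the monotone coupling $\hat Y^r(\infty)\le_{st}Y^r(\infty)$; the conservation law $\beta\E Z^r(\infty)=\beta\E\hat Z^r(\infty)=r$, which converts the gap bound into $\E\lceil\hat T^r(\infty)\rceil-\E\lceil T^r(\infty)\rceil\le C$; and the algebraic identity \eqn{eq-new1} expressing $\gamma\E Y^r$ through $\E[X_*^r-T^r]$ plus boundary error terms, yielding $\E Y^r(\infty)-\E\hat Y^r(\infty)=o(\sqrt r)$, which Lemma~\ref{lem-conv-stoch-order} (stochastic order plus convergence of means) upgrades to convergence in distribution and of the one-sided means. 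Your route is more standard and arguably more robust -- it never needs the monotone coupling and would survive perturbations that break it -- but it must separately establish tightness, uniform integrability, and the negligibility of the truncation regions in the adjoint relation; the paper's route gets tightness, uniform integrability and the half-moment convergence essentially for free from the explicit artificial-process distribution and the stochastic ordering, at the price of the ad hoc identity. Note also that in your argument the artificial process ends up purely motivational (the OU generator identifies the limit directly), whereas in the paper it is load-bearing.
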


Note that Theorem~\ref{thm-main} requires that $\gamma\ge 1$ is integer. This assumption is made to simplify the analysis, and we believe it to be purely technical. We conjecture that Theorem~\ref{thm-main} holds {\em as is} for any real $\gamma>0$. 
GBS policy itself is defined for {\em any real} $\gamma>0$, and we do use non-integer $\gamma$ in our simulations. 

Also note that the variance $1/(\beta\gamma)$ of the limiting normal random variable in \eqn{eq-main-conv} has the GBS 
parameter $\gamma>0$  in the denominator. This means that, {\em in the asymptotic limit $r\to\infty$,} GBS policy reduces the variance of 
the steady-state inventory level $Y^r(\infty)$ by the factor $\gamma$, compared with CBS policy. (This is the effect of GBS ``amplifying'' the drift 
of $Y^r$ towards $0$ by factor $\gamma$; see the discussion in Section~\ref{sec-key-diff}.) 
Does this mean that, {\em for a given fixed $r$} ``the larger the $\gamma$ the better?'' Of course, not. 
First, this does {\em not} follow from Theorem~\ref{thm-main}, where the limit is in $r\to\infty$, for a given $\gamma$.
Second, it is natural to conjecture, and our simulations confirm this, that for each fixed $r$ there is a non-trivial optimal value of parameter
$\gamma$, in the sense of minimizing the variance of $Y^r$. The informal motivation for this conjecture is as follows.
As we increase parameter $\gamma$, the discrepancy between $Z^r$ and its dynamic target $T^r$ should become larger; this is a factor that, most likely, {\em increases} variance of $Y^r$ and becomes dominant as $\gamma$ 
increases (with $r$ being constant).  

Formally,
Theorem~\ref{thm-main} implies the following corollary.

\begin{cor}
\label{cor1}
Suppose we are in the conditions of Theorem~\ref{thm-main}, except $\gamma$ may depend on $r$.
Then, the dependence $\gamma=\gamma(r)$ can be chosen in a way such that
$$
\frac{Y^r(\infty)-x_*}{\sqrt{r}} \Rightarrow 0,
$$
and, moreover, 
$$
\E \frac{|Y^r(\infty)-x_*|}{\sqrt{r}} \to 0.
$$
\end{cor}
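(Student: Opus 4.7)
The plan is to derive Corollary~\ref{cor1} from Theorem~\ref{thm-main} by a diagonalization argument: because Theorem~\ref{thm-main} gives, for each fixed integer $\gamma \ge 1$, an $L^1$-type limit for $(Y^r(\infty)-x_*)/\sqrt{r}$ whose value vanishes as $\gamma \to \infty$, letting $\gamma(r)$ grow sufficiently slowly with $r$ will force both conclusions.

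First, I would combine the two stated limits of Theorem~\ref{thm-main} for the positive and negative parts of $(Y^r(\infty)-x_*)/\sqrt{r}$ to obtain, for each fixed integer $\gamma \ge 1$,
\[
\E \frac{|Y^r(\infty)-x_*|}{\sqrt{r}} \;\longrightarrow\; \sqrt{\tfrac{2}{\pi\beta\gamma}} \quad \text{as } r\to\infty.
\]
In particular, for each $\gamma$ there exists a threshold $r^*_\gamma$ such that for all $r \ge r^*_\gamma$,
\[
\E \frac{|Y^r(\infty)-x_*|}{\sqrt{r}} \;\le\; \sqrt{\tfrac{2}{\pi\beta\gamma}} + \tfrac{1}{\gamma}.
\]
Without loss of generality I may take $r^*_1 < r^*_2 < r^*_3 < \cdots$ strictly increasing (by replacing $r^*_\gamma$ with $\max(r^*_\gamma, r^*_{\gamma-1}+1)$ if needed).

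Second, I would define $\gamma(r) := \max\{\gamma \ge 1 : r^*_\gamma \le r\}$ for $r \ge r^*_1$ and $\gamma(r) := 1$ for smaller $r$. By construction $\gamma(r)$ is a positive integer for every $r$ (so Theorem~\ref{thm-main} applies at each $r$ with parameter $\gamma(r)$) and $\gamma(r) \to \infty$ as $r\to \infty$. Plugging this choice into the inequality above yields
\[
\E \frac{|Y^r(\infty)-x_*|}{\sqrt{r}} \;\le\; \sqrt{\tfrac{2}{\pi\beta\gamma(r)}} + \tfrac{1}{\gamma(r)} \;\longrightarrow\; 0,
\]
which is the second assertion of the corollary. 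The first assertion, namely $(Y^r(\infty)-x_*)/\sqrt{r} \Rightarrow 0$, then follows immediately since $L^1$ convergence to $0$ implies convergence in probability, which implies convergence in distribution.

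I do not anticipate a substantive obstacle: the corollary is essentially a bookkeeping consequence of Theorem~\ref{thm-main}. The only point requiring a bit of care is to ensure that $\gamma(r)$ is a \emph{positive integer} at each $r$ (so that Theorem~\ref{thm-main}, which currently requires integer $\gamma$, is legitimately invoked for the fixed value $\gamma(r)$), and that the thresholds $r^*_\gamma$ are arranged into a strictly increasing sequence so that the step function $\gamma(r)$ is well defined and tends to infinity. Both are handled by the construction above.
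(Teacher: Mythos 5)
Your proof is correct and is essentially the same diagonalization argument as the paper's: fix an increasing integer sequence of $\gamma$ values, extract from Theorem~\ref{thm-main} a threshold in $r$ beyond which the expected absolute scaled inventory is within a small margin of its limit $\sqrt{2/(\pi\beta\gamma)}$, arrange the thresholds to be strictly increasing, and let $\gamma(r)$ be the resulting step function. The only cosmetic difference is the form of the error margin (the paper uses $3(2\pi\beta\gamma_k)^{-1/2}$ where you use the limit plus $1/\gamma$), which changes nothing.
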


{\em Proof.} Consider any positive increasing integer sequence $\gamma_k \uparrow \infty$, $k=1,2,\ldots$. By Theorem~\ref{thm-main},
for any $k$ there exists $r_k$ such that 
$$
\E \left| \frac{Y^r(\infty)-x_*}{\sqrt{r}}\right| \le 3 (2\pi\beta\gamma_k)^{-1/2}, ~~\forall r\ge r_k.
$$
Clearly, the sequence $r_k$ can be chosen so that it is strictly increasing and $r_k \uparrow \infty$.
Then, it suffices the choose the function $\gamma(r)$ defined as: $\gamma(r)=\gamma_k$ for $r_k \le r < r_{k+1}$.
$\Box$

Let $\cc_{CBS}$ and $\cc_{GBS}$ denote the average costs under CBS policy and under GBS policy (with $\gamma$ chosen as in Corollary~\ref{cor1}). 

For the $\cc_{CBS}$ we have:
\beql{eq-CBS-lower}
\liminf_{r\to\infty} \E |Y^r(\infty)|/\sqrt{r} \ge 2 (2\pi\beta)^{-1/2}.
\eeql
Bound \eqn{eq-CBS-lower} follows from 
Theorem~\ref{thm-main} with $\gamma=1$ (recall that CBS is GBS with $\gamma=1$):
$$
\lim_{r\to\infty} \E \left| \frac{Y^r(\infty)-x_*}{\sqrt{r}}\right| = 2 (2\pi\beta)^{-1/2},
$$
which means that (for any dependence $x_* = x_*(r)$) inequality \eqn{eq-CBS-lower} holds.
We note, however, that Theorem~\ref{thm-main} applies to a modified CBS policy, with $f=f(r)$ increasing to infinity, but being finite for each $r$. The ``pure'' CBS policy has $f=\infty$. It is easy to see directly that \eqn{eq-CBS-lower} holds for the pure CBS policy, in fact it holds for {\em arbitrary} lead time distribution (with mean $1/\beta$). Indeed, pure CBS policy is such that $Z^r+Y^r=X_{**}$ at all times, for some integer constant $X_{**}$, depending on $r$. (In this case $Z^r \equiv T^r \equiv X^r \equiv X_{**} - Y^r$.) Under pure CBS policy, a new item (supply unit) is ordered when and only when a new customer (demand unit) arrives. Therefore, the process of new orders is Poisson with rate $r$. Since lead times $L$ are i.i.d. with mean $1/\beta$, this implies that in steady-state $Z^r$ has Poisson distribution with mean $r/\beta$.
Therefore, $[Z^r(\infty) - r/\beta]/\sqrt{r} \Rightarrow \mathcal N(0,\beta^{-1})$ and, moreover, 
$\lim_{r\to\infty} \E |Z^r(\infty) - r/\beta|/\sqrt{r} = 2 (2\pi\beta)^{-1/2}$. Then, $[Y^r(\infty) + r/\beta-X_{**}]/\sqrt{r} \Rightarrow \mathcal N(0,\beta^{-1})$ 
and, moreover, 
$\lim_{r\to\infty} \E |Y^r(\infty) + r/\beta-X_{**}|/\sqrt{r} = 2 (2\pi\beta)^{-1/2}$. This implies \eqn{eq-CBS-lower}, for any dependence $X_{**}=X_{**}(r)$.

From \eqn{eq-CBS-lower} we obtain
\beql{eq-CBS-lower-111}
\liminf_{r\to\infty} \cc_{CBS}/\sqrt{r} \ge \min\{h,\theta\} \liminf_{r\to\infty} \E |Y^r(\infty)|/\sqrt{r} \ge \min\{h,\theta\} 2 (2\pi\beta)^{-1/2}.
\eeql

At the same time, for the GBS with $x_*\equiv 0$ (and $\gamma$ chosen as in Corollary~\ref{cor1}),
$$
\lim_{r\to\infty} \E |Y^r(\infty)|/\sqrt{r}  = 0,
$$
and therefore $\lim_{r\to\infty} \cc_{GBS}/\sqrt{r}  = 0$. We conclude that
$$
\cc_{GBS}/\cc_{CBS} \to 0, ~~~r\to\infty.
$$
In other words, GBS policy becomes infinitely better than CBS policy in the sense of average inventory costs.

\section{Proof of Theorem~\ref{thm-main}}
\label{sec-proof-main}

In this theorem, the integer parameter $\gamma \ge 1$ is fixed.  
Without loss of generality assume that $x_*(r)\equiv 0$. (If not, we can consider $Y^r - x_*(r)$ instead of $Y^r$, 
and 
exactly same proof applies.) Therefore, we have $X^r=X_*^r-\gamma Y^r$.
Denote by $b=b(r)=f(r)/\gamma$ 
the scaled version of $f(r)$. 

The fact that $(Y^r(t),Z^r(t)), ~t\ge 0$, is an irreducible continuous-time countable Markov chain is rather trivial; indeed, recall that
$X_*^r = r/\beta$ is constant for each $r$, and $X^r$ and $T^r$ are just deterministic functions of $Y^r$:
$X^r=X_*^r-\gamma Y^r$,  $T^r = [X^r \wedge (X_*^r+f(r)] \vee 0$; the irreducibility is easy to verify directly. 
The stability of this process (for any sufficiently large $r$) is also easily verified, for example by using the fluid limit technique (\cite{RS92, Dai95, St95, Bramson-book}).
Indeed, since for any fixed $r$, $Z^r(t)$ takes values in a finite set, it is easy to check that when $|Y^r|$ is large, it will have an average drift towards the origin. (Note that here we use the fact that $T^r(t)$, and then $Z^r(t)$, is upper bounded by  a constant $r/\beta + f(r)$.) We omit further details of the stability proof.

The rest of this section is structured as follows. 
In Section~\ref{sec-gbs} we establish some key properties of the system process
under GBS policy; specifically, we show (in Lemma~\ref{lem-deviation}) that the steady-state ``gap'' $\E D^r =\E (Z^r-T^r)$
between $Z^r$ and $T^r$ is uniformly bounded in $r$.
Then, in Section~\ref{sec-stylized} we introduce and study an {\em artificial} process, which would arise if GBS policy would be allowed
to remove in-transit items if necessary to keep $Z^r$ always exactly equal $T^r$; for this artificial process we prove the asymptotic 
limit of stationary distributions, which is exactly as described in Theorem~\ref{thm-main}. In Section~\ref{sec-compare} we establish some relations and estimates, comparing 
the stationary distributions of the actual and the artificial processes; in particular, we show that the ``target gap'' (the difference between
the expectation $\E \lceil T^r \rceil$ and the corresponding expectation for the artificial process) is equal to the ``gap'' $\E D^r =\E (Z^r-T^r)$.
In Section~\ref{subsec-conv-stoch-order} we give a generic auxiliary fact that we need.
Finally, in
Section~\ref{subsec-proof-main} we combine all theses ``pieces'' to conclude the
proof of Theorem~\ref{thm-main}; most importantly, there we show that a distance between stationary net inventory level distributions for the actual and artificial processes can be upper bounded in terms of the ``gap'' $\E D^r =\E (Z^r-T^r)$ (plus additional ``error terms''), which vanishes (along with the error terms) after $1/\sqrt{r}$- scaling.

\subsection{Properties of the GBS process}
\label{sec-gbs} 

We refer to $(Y^r(t),Z^r(t))$ as the GBS process, which specifies the net inventory level and inventory in-transit in system $r$ under GBS policy. 
Recall that $(Y^r(\infty),Z^r(\infty))$ refers to the random state of process $(Y^r(t),Z^r(t))$ in stationary regime. 
In steady-state the average rate of item arrivals into the system is equal to the customer arrival rate, namely
\beql{eq-conserv-gbs}
\beta \E Z^r(\infty) = r.
\end{equation}

The GBS process $(Y^r(t),Z^r(t))$ is such that $Z^r(t) \ge \lceil T^r(t) \rceil$ at all times. 
Denote 
\beql{eq-d-defs}
D^r(t) \doteq Z^r(t)  - \lceil T^r(t) \rceil, ~~~ \tilde D^r(t) \doteq Z^r(t)  - \lceil T^r(t) \wedge X^r(t) \rceil.
\eeql
Clearly, $\tilde D^r(t) \ge D^r(t) \ge 0$. In Lemma~\ref{lem-deviation} we will show that $\E \tilde D^r(\infty)$ (and then $\E D^r(\infty)$) is uniformly bounded for all sufficiently large $r$. Its proof, in particular, constructs a random walk stochastically dominating $\tilde D^r(t)$ and uses the following fact.

\begin{prop}
\label{prop-coupling1}
Consider two countable state-space, discrete-time Markov chains $S^{(i)}_n, ~n=0,1,2,\ldots,$ where $n$ is the time index and $i=1,2$ is the chain index. Suppose there exist a deterministic integer-valued non-decreasing function $\psi(m), ~m\in \mz,$ and integer-valued projections 
\beql{eq-proj}
\Pi^{(i)}_n=A^{(i)}(S^{(i)}_n), ~i=1,2
\eeql
(i.e. each $A^{(i)}$ is a deterministic integer-valued mapping) such that each
process $S^{(i)}_n, ~i=1,2,$ satisfies the following condition:
\beql{eq-pi-relation}
\Pi^{(i)}_{n+1} = \psi(\Pi^{(i)}_n + B^{(i)}_n),
\eeql
where $B^{(i)}_n$ is an 
integer-valued random variable, such that, conditioned on any fixed state $S^{(i)}_n=s^{(i)}$ of the process at time $n$, 
the distribution of $(S^{(i)}_{n+1},B^{(i)}_n)$ depends only on $s^{(i)}$ (and not $n$ or the process history 
$(S^{(i)}_\ell,B^{(i)}_\ell), ~\ell \le n-1,$ before $n$). 
Further assume that, uniformly on any fixed states $s^{(1)}$ and $s^{(2)}$, 
the distributions of $(S^{(1)}_{n+1},B^{(1)}_n)$ and $(S^{(2)}_{n+1},B^{(2)}_n)$
conditioned on $S^{(1)}_n=s^{(1)}$ and $S^{(2)}_n=s^{(2)}$, respectively, are such that 
\beql{eq-B-order}
B^{(1)}_n \le_{st} B^{(2)}_n.
\eeql
 Then the following holds.

(i) If the initial states $S^{(i)}_0$, $i=1,2$, are such that $\Pi^{(1)}_0 \le \Pi^{(2)}_0$, then both Markov chains can be coupled (constructed on a common probability space) so that $\Pi^{(1)}_n \le \Pi^{(2)}_n$ for all $n=0,1,2,\ldots$.

(ii) If both Markov chains are irreducible and positive recurrent, then $\Pi^{(1)}_\infty \le_{st} \Pi^{(2)}_\infty$, where $\Pi^{(i)}_\infty$ is a random value of $\Pi^{(i)}_n$ in the (unique) stationary regime. 
\end{prop}

The meaning of Proposition~\ref{prop-coupling1} is simple. If $\Pi^{(1)}_n$ and $\Pi^{(2)}_n$ are two (different) projections of some (different) Markov chains,
such that the relation \eqn{eq-pi-relation} exists and the stochastic order \eqn{eq-B-order} always holds {\em uniformly 
in $n$ and all possible states of both Markov chains}, then the Markov chains can be coupled so that $\Pi^{(1)}_n \le \Pi^{(2)}_n$ holds at all times.
We emphasize that, for each underlying Markov chain $S^{(i)}_n$,
the relations \eqn{eq-proj} and \eqn{eq-pi-relation} are relations that, by the proposition assumption, just happen to exist simultaneously. They are {\em not} two different definitions of $\Pi^{(i)}_n$, and therefore do {\em not} ``contradict'' each other.
In particular, the sequence of random variables $B^{(i)}_n, ~n=0,1,\ldots,$ does {\em not} define Markov chain $S^{(i)}_n$ via relation \eqn{eq-pi-relation}; it does, for example, in the special case when $S^{(i)}_n$ is integer-valued and
the projection $A^{(i)}$ is identity, i.e. $\Pi^{(i)}_n=S^{(i)}_n$; but this is not true in general and is not assumed in the proposition.

\proof{Proof of Proposition~\ref{prop-coupling1}:} Statement (ii) is a corollary of (i), so only (i) needs to be proved. 

Consider, first, how each individual Markov chain $S^{(i)}_n$ (with $i=1$ or $i=2$) could be constructed. Recall that, 
{\em conditioned on the process state at time $n$}, $S^{(i)}_n=s^{(i)}$, 
the pair $(S^{(i)}_{n+1},B^{(i)}_n)$ has some joint distribution (depending on $s^{(i)}$).
Then Markov chain $S^{(i)}_n$ can be constructed as follows.
Given the state $S^{(i)}_n=s^{(i)}$ at time $n$ we first draw a realization of $B^{(i)}_n$,
according to its marginal distribution induced by the distribution of  $(S^{(i)}_{n+1},B^{(i)}_n)$ (conditioned on $S^{(i)}_n=s^{(i)}$). 
Then we draw a realization of $S^{(i)}_{n+1}$ according to its distribution conditioned on $S^{(i)}_n=s^{(i)}$ and
the realization of $B^{(i)}_n$. Clearly, the distribution of $S^{(i)}_{n+1}$ conditioned on $S^{(i)}_n=s^{(i)}$
observes the transition probabilities of this Markov chain. Thus, 
under this construction, process $S^{(i)}_n$ observes its transition probabilities.

We now give the coupled construction of the two Markov chains $S^{(i)}_n$, $i=1,2$, such that each of them, marginally, observes
its transition probabilities, and $\Pi^{(1)}_n \le \Pi^{(2)}_n$ holds at all times with probability $1$.
Given the processes' states $S^{(i)}_n=s^{(i)}$, $i=1,2$, at time $n$, we first draw a random value of pair $(B^{(1)}_n,B^{(2)}_n)$,
such that the conditional marginal distributions of $B^{(1)}_n$ and $B^{(2)}_n$ are observed and  $B^{(1)}_n \le B^{(2)}_n$.
(We can do that, since the conditional marginal distributions are such that $B^{(1)}_n \le_{st} B^{(2)}_n$ uniformly in 
$(s^{(1)},s^{(2)})$.) Then, {\em independently for each $i=1,2,$}
we draw a realization of $S^{(i)}_{n+1}$ according to its distribution conditioned on the realization of $(S^{(i)}_{n},B^{(i)}_n)$.
It is straightforward to see that each process $S^{(i)}_n$ observes its transition probabilities and, by construction,
$B^{(1)}_n \le B^{(2)}_n$ for all $n=0,1,2,\ldots$ with probability $1$. Consequently, 
by \eqn{eq-pi-relation} and induction in $n$, 
$\Pi^{(1)}_n \le \Pi^{(2)}_n$ for all $n=0,1,2,\ldots$ with probability $1$. 
(See, e.g., \cite{Lindvall} for a general reference on coupling techniques.)
\Halmos
\endproof

\begin{lem}
\label{lem-deviation}
Expectations $\E \tilde D^r(\infty)$  are uniformly bounded:  $\sup_{r} \E \tilde D^r(\infty) < \infty$. Consequently, 
for some constant $C>0$ and all sufficiently large $r$,
$$
\E D^r(\infty) \le \E \tilde D^r(\infty) \le C.
$$
\end{lem}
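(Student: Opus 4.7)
The plan is to construct a reflected random walk $\hat D$ on $\mz_+$ that stochastically dominates $\tilde D^r$ and has stationary expectation uniformly bounded in $r$, then invoke Proposition~\ref{prop-coupling1}. Two preliminary ingredients are needed. First, a uniform cap $Z^r(t) \leq \lceil X_*^r + f(r)\rceil$ for all $t$, which follows directly from GBS itself: ordering can only push $Z^r$ up to $\lceil T^r\rceil \leq \lceil X_*^r + f\rceil$, while each item arrival decrements $Z^r$ by one. Consequently the item-arrival rate is always at most $\beta(X_*^r + f + 1)$. Second, a pair of one-step identities for $\tilde D^r$: writing $\Delta_c, \Delta_i$ for the changes in $\lceil V^r\rceil$ at a customer (resp.\ item) arrival, a case analysis on the regime of $V^r$ (deep generic $V^r + \gamma \leq X_*^r + f$; near-truncation $V^r < X_*^r + f \leq V^r + \gamma$; full truncation $V^r = X_*^r + f$) and on the sign of $V^r$ should yield that at every customer arrival $\tilde D^r_{\text{new}} = (\tilde D^r - \gamma)^+$ exactly, and at every item arrival $\tilde D^r_{\text{new}} \leq \tilde D^r + (\gamma - 1)$.

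The customer identity is the nontrivial one: in deep generic $\Delta_c = \gamma$ holds outright because $\gamma$ is an integer; in near-truncation the $Z^r$-cap forces $\Delta_c = \lceil X_*^r + f\rceil - \lceil V^r\rceil \geq \tilde D^r$, so $\tilde D^r$ reflects to exactly $0$; in truncation $\tilde D^r$ is already $0$. With these identities in hand, I would take $\hat D$ to jump by $-\gamma$ (reflected at $0$) at rate $r$ and by $+(\gamma - 1)$ at rate $\beta(X_*^r + f + 1)$. Uniformizing both processes at common rate $\Lambda = r + \beta(X_*^r + f + 1)$ turns them into discrete-time chains to which Proposition~\ref{prop-coupling1} applies. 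The dominance hypothesis $B^{(1)}_n \leq_{st} B^{(2)}_n$ given equal projections is checked threshold-by-threshold: customer increments coincide ($-\gamma$), and any nonnegative jump of $B^{(1)}$ requires a real item event, whose probability $\beta Z^r/\Lambda$ is at most $\beta(X_*^r + f + 1)/\Lambda$, matching $\hat D$. Proposition~\ref{prop-coupling1}(ii) then delivers $\tilde D^r(\infty) \leq_{st} \hat D(\infty)$.

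It remains to bound $\E \hat D(\infty)$. The drift of $\hat D$ equals $(\gamma - 1)\beta(X_*^r + f + 1) - \gamma r = -r + (\gamma - 1)\beta(f + 1)$, which lies below $-r/2$ once $r$ is large enough that $(\gamma - 1)\beta(f + 1) < r/2$, a condition guaranteed by \eqref{eq-f-growth}. Combined with the bounded increments, a standard Kingman-type or quadratic-Lyapunov bound for reflected random walks with negative drift yields $\E \hat D(\infty) \leq C(\gamma)$, a constant depending only on $\gamma$. Hence $\E \tilde D^r(\infty) \leq C(\gamma)$ uniformly in $r$, and $\E D^r(\infty) \leq \E \tilde D^r(\infty) \leq C$ because $D^r \leq \tilde D^r$. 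The main obstacle is the case analysis establishing the exact customer identity, especially in the near-truncation regime, where the crisp equality $\tilde D^r_{\text{new}} = 0$ relies on the interplay of the integer-$\gamma$ hypothesis with the $Z^r$-cap; once that identity is in place, the remaining coupling and random-walk estimates are routine.
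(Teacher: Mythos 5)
Your proposal is correct and follows essentially the same route as the paper's proof: the same one-step dynamics of $\tilde D^r$ (exact reflected down-jump by $\gamma$ at customer arrivals, up-jump by at most $\gamma-1$ at item arrivals, with the item-arrival rate capped near $r+\beta f(r)$ thanks to the truncation of $T^r$), followed by stochastic domination by a negative-drift reflected random walk via Proposition~\ref{prop-coupling1}. The only differences are technical bookkeeping: the paper works with the embedded jump chain and an $r$-independent dominating walk (paying a sojourn-time factor of $3$ to return to continuous time), whereas you uniformize at a common rate and use an $r$-dependent walk whose stationary mean you bound uniformly by a Kingman/Lyapunov drift argument.
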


\proof{Proof:}
The basic intuition for the proof is that $\tilde D^r(t)$ has a negative drift, uniformly in all large $r$. More specifically, as long as 
$\tilde D^r(t) \ge \gamma$: (a) upon every customer arrival $\tilde D^r(t)$ decreases by $\gamma$, (b) upon every item arrival
$\tilde D^r(t)$ can increase at most by $\gamma-1$, (c) the probability that the next arrival is a customer arrival is at least $1/2-\epsilon$,
for a small $\epsilon>0$. This allows us to construct a process (independent of $r$), which stochastically dominates $\tilde D^r(t)$
(considered at the times of arrivals), and apply Proposition~\ref{prop-coupling1}. The formal proof is as follows.

To avoid clogged notation, within this proof, let us assume that $b=b(r)$ and $X_*^r/\gamma$ (and then $X_*^r$) happen to be integer -- this makes $X^r(t)$ and $T^r(t)$ integer-valued. 
(Otherwise, we would have to write $\lceil X^r(t) \rceil$, $\lceil T^r(t) \rceil$, $\lceil X_*^r + f(r) \rceil$, instead of 
$X^r(t)$, $T^r(t)$, $X_*^r + f(r)$, respectively, everywhere in the proof.)

It is easy to verify from the definitions of $\tilde D^r$ and GBS policy that the behavior of the process $\tilde D^r(t)$ is as follows. Upon a customer arrival at $t$ (and the corresponding policy actions)
$\tilde D^r$ jumps down as $\tilde D^r(t) = [\tilde D^r(t-) - \gamma] \vee 0$;
in other words $\tilde D^r$ jumps down by $\gamma$, but not below zero. 
Upon an item arrival at $t$, $\tilde D^r$ jumps up as $\tilde D^r(t) = \tilde D^r(t-) + (\gamma-1) - [(X^r(t-) - T^r(t-))\vee 0]\wedge (\gamma-1)$; in other words,
$\tilde D^r$ jumps up by $\gamma-1$ or, possibly, a smaller number (which happens only when $X^r(t-) > T^r(t-) = Z^r(t-) = X_*^r + f(r)$).

Note that the customer arrival rate is constant, equal $r$. The item arrival rate is upper bounded by $r+\beta f(r)$. 
(This is one of the places where we use the fact that the $T^r(t)$, and then $Z^r(t)$, is upper bounded by $r/\beta + f(r)$,
and assumption \eqn{eq-f-growth}.)
Then, for an arbitrarily small fixed 
$\epsilon >0$, and all sufficiently large $r$, the probability a given arrival into the system is a customer arrival, is at least $1/2 -\epsilon$, regardless
of the current state and history of the process. 
Consider the imbedded Markov chain $S^r_n=(Z^r_n,Y^r_n), ~n=0,1,2, \ldots,$ at the times right after the arrivals (and corresponding policy actions);
let $\tilde D^r_n = A^r(S^r_n), ~n=0,1,2, \ldots,$ be the projection of this Markov chain, where $A^r$ is the deterministic mapping from $S^r=(Z^r,Y^r)$ into $\tilde D^r$. 
Consider also  the following random walk $Q_n$ (reflected at $0$): 
at each time step $n$, $Q_n$ jumps as $Q_n = Q_{n-1} +(\gamma-1)$ with probability $1/2+\epsilon$ and jumps as 
$Q_n = [Q_{n-1} - \gamma] \vee 0$ with probability $1/2-\epsilon$. Let us fix $\epsilon >0$ small enough, so that this random walk has negative one-step drift
$-\delta=(1/2+\epsilon)(\gamma - 1) - (1/2-\epsilon)\gamma < 0$, as long as $Q_n\ge \gamma$. Note that the random walk $Q_n$ is independent of $r$.
Then, the Markov chain $Q_n$ is irreducible, positive recurrent, and 
in steady-state $\E Q_n < \infty$.
(This follows by a standard drift argument, given in the E-Companion. In fact, we obtain $\E Q_n \le [2(\gamma-1+\delta)(\gamma-1)+\gamma^2]/[2\delta]$,
but we will not need this explicit bound.)

We see that, for all sufficiently large $r$, Proposition~\ref{prop-coupling1} applies to the processes $S^r_n$ and $Q_n$,
playing the roles of $S^{(1)}_n$ and $S^{(2)}_n$ respectively, with
$\tilde D^r_n$ and $Q_n$ being the projections $\Pi^{(1)}_n$ and $\Pi^{(2)}_n$ respectively, and with $\psi(m)= m^+$. Indeed, here the projection of $Q_n$ is $Q_n$ itself, 
and $B^{(2)}_n$ are simply i.i.d. random variables taking values $\gamma-1$ and $-\gamma$ with probabilities
$1/2+\epsilon$ and $1/2-\epsilon$, respectively. For the process $S^r_n$ (which plays the role of $S^{(1)}_n$), 
$B^{(1)}_n$ is defined as the ``up-jump size'' $\tilde D^r_{n+1} - \tilde D^r_n \le \gamma-1$ if the transition is due to an item arrival,
and as $-\gamma$ if the transition is due to a customer arrival; since the probability that a transition is due to an item arrival is at most
$1/2+\epsilon$, we see that $B^{(1)}_n \le_{st} B^{(2)}_n$ always holds.

Thus, denoting by $Q_\infty$ and $\tilde D^r_\infty$ the random values of $Q_n$ and $\tilde D^r_n$, respectively, in steady-state, we obtain 
by Proposition~\ref{prop-coupling1} that $\tilde D^r_\infty \le_{st} Q_\infty$, for all large $r$, where $\E Q_\infty < \infty$.  

Now we need to make transition from the imbedded discrete-time Markov chain $S^r_n=(Z^r_n,Y^r_n)$ steady-state bounds to those for the actual
continuous-time Markov chain $S^r(t)=(Z^r(t),Y^r(t))$. Let $\tau(S^r)$ be the expected time the continuous time process spends in state $S^r$.
Note that uniformly on all possible states $S^r$, $1/(3r)  \le \tau(S^r) \le 1/r$. (The instantaneous rate of all arrivals into the system -- both customers and items --
is between $r$ and $3r$.) The expression for the stationary distribution of $S^r(t)$ (i.e., the distribution of $S^r(\infty)$) in terms of the stationary distribution of the imbedded chain $S^r_n$ (i.e., the distribution of $S^r_\infty$) is 
as follows (see, e.g., the discussion and references in \cite{MT93}, page 510, and a general form of the relation, covering our setting, in \cite{St2003}, Lemma 10.1):
$$
\pr\{S^r(\infty) = S^r\} = \frac{ \pr\{S^r_\infty = S^r\} \tau(S^r) }{\sum_{\sigma^r} \pr\{S^r_\infty = \sigma^r\} \tau(\sigma^r) }.
$$
From here we obtain
$$
\pr\{\tilde D^r(\infty) \ge k\} \le 3 \pr\{\tilde D^r_\infty \ge k\} \le 3 \pr\{Q_\infty \ge k\}, ~~k \ge 0,
$$
which implies that $\E \tilde D^r(\infty)< \E Q_\infty$ for all large $r$.
\Halmos
\endproof

\subsection{The artificial process}
\label{sec-stylized} 

Consider the following artificial process, which will serve for comparison to GBS process. 
The artificial process describes the system, operating under a policy which is same as GBS policy, except the items not only can be invited at any time, but in-transit items (ordered but not arrived yet) can also be removed from the system at any time, so that $Z^r=\lceil T^r \rceil$ at all times. Specifically, the artificial process is defined as follows.
For the variables pertaining to the artificial process, as opposed to the actual GBS process, we will use notations with a hat:
 $\hat Y^r, \hat X^r, \hat T^r, \hat Z^r$; the value $X_*^r$ is common for both GBS and artificial processes.
 The system process $\hat Y^r(t)$ is one-dimensional, with $\hat X^r(t),\hat T^r(t),\hat Z^r(t)$ being deterministic functions of $\hat Y^r(t)$:
 $\hat X^r=X_*^r-\gamma \hat Y^r$,  $\hat T^r = [\hat X^r \wedge (X_*^r+f(r)] \vee 0$, $\hat Z^r(t) = \lceil \hat T^r(t) \rceil$. Note that 
$$
\hat Z^r(t) = \lceil \hat T^r(t) \rceil= \lceil  (X_*^r- \gamma \hat Y^r(t)) \wedge (X_*^r+\gamma b) \rceil.
$$
Also note that $\hat Y^r(t)$ can take any integer value in $(-\infty, \lceil X_*^r/\gamma \rceil]$, and $\hat Z^r(t)$ is confined to interval $[0, \lceil X_*^r+\gamma b \rceil]$.

The process $\hat Y^r(t)$ is a simple one-dimensional birth and death Markov chain; its stability is easily verified.
Similarly to \eqn{eq-conserv-gbs}, we have 
\beql{eq-conserv-sty}
\beta \E \hat Z^r(\infty) = r.
\end{equation}

We now show that the properties stated in Theorem~\ref{thm-main} hold for the artificial process.

\begin{lem}
\label{lem-stylized}
Consider a fixed integer $\gamma \ge 1$. Suppose the lead time distribution is exponential (with mean $1/\beta$).
For each $r$ consider the system under the artificial process in the stationary regime. 
Then, 
\beql{eq-conv-weak-styl}
\frac{\hat Y^r(\infty)}{\sqrt{r}} \Rightarrow \mathcal N(0,(\beta\gamma)^{-1}).
\end{equation}
 Moreover, the expectations of $[\hat Y^r(\infty)]/\sqrt{r}]^+$ and $[\hat Y^r(\infty)]/\sqrt{r}]^-$ converge to the corresponding expectations for the $\mathcal N(0,(\beta\gamma)^{-1})$:
\beql{eq-conv-mean-styl}
\E [\hat Y^r(\infty)/\sqrt{r}]^+ \to (2\pi\beta\gamma)^{-1/2}, ~~~ \E [\hat Y^r(\infty)/\sqrt{r}]^- \to (2\pi\beta\gamma)^{-1/2}.
\end{equation}
\end{lem}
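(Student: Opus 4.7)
The plan is to treat $\hat Y^r$ as the one-dimensional birth--death Markov chain it is, and obtain its stationary distribution $\pi^r$ in closed form via detailed balance. The death rate at state $y$ is the customer arrival rate $r$ and the birth rate is $\beta \hat Z^r(y) = \beta\lceil[(X_*^r-\gamma y)\wedge(X_*^r+f(r))]\vee 0\rceil$, so
$$
\frac{\pi^r(y+1)}{\pi^r(y)} \;=\; \frac{\beta\hat Z^r(y)}{r}.
$$
In the ``central'' range $-b\le y\le X_*^r/\gamma$, with $b=f(r)/\gamma$, this ratio equals $1-\beta\gamma y/r+O(1/r)$. Iterating from $y=0$ and expanding $\log(1+x)$, the per-step $O(1/r)$ corrections and the quadratic-in-$x$ terms accumulate over any window $|y|\le K\sqrt{r}$ into a uniform $O(r^{-1/2})$ remainder, giving
$$
\log\pi^r(y)-\log\pi^r(0) \;=\; -\frac{\beta\gamma y^2}{2r} + O(r^{-1/2}).
$$
After normalization $\sum_y \pi^r(y)=1$, this is a local central limit statement: $\sqrt{r}\,\pi^r(\lfloor\sqrt{r}\,\tilde y\rfloor)\to \sqrt{\beta\gamma/(2\pi)}\,e^{-\beta\gamma\tilde y^2/2}$ pointwise in $\tilde y\in\R$.

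Next, I would control the mass outside the bulk. For $y>K\sqrt{r}$ (still in the central range) the ratio is $\le 1-\beta\gamma K/\sqrt{r}$, and the birth rate is $0$ past $\lceil X_*^r/\gamma\rceil$, so the upper tail is sub-Gaussian and negligible uniformly as $K\to\infty$. For $y<-b$ the birth rate is frozen at $\lceil X_*^r+f(r)\rceil$, so $\pi^r(-b-k)/\pi^r(-b)=(1+\beta\gamma b/r)^{-k}$ and the total mass below $-b$ is at most $\pi^r(-b)\cdot r/(\beta\gamma b)$. Applying the same iteration argument on $[-b,0]$ yields the rough upper bound $\pi^r(-b)\le Cr^{-1/2}\exp(-\beta\gamma b^2/(2r))$, and the assumption $f(r)/\sqrt{r}\to\infty$ from \eqn{eq-f-growth} forces $b^2/r\to\infty$, so this tail mass vanishes super-exponentially. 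This is the unique place where the lower-growth condition $f(r)/\sqrt{r}\to\infty$ is actually used.

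Combining the local CLT on the bulk with the vanishing tail contributions yields the weak convergence \eqn{eq-conv-weak-styl}. For the moment statement \eqn{eq-conv-mean-styl}, the same tail estimates supply uniform integrability of $\hat Y^r(\infty)/\sqrt{r}$, so both $\E[(\hat Y^r(\infty)/\sqrt{r})^+]$ and $\E[(\hat Y^r(\infty)/\sqrt{r})^-]$ converge to the corresponding half-moments of $\mathcal N(0,(\beta\gamma)^{-1})$, each equal to $(2\pi\beta\gamma)^{-1/2}$. The main obstacle is the careful bookkeeping of the accumulated $O(1/r)$ per-step errors in the logarithmic sum, so that they remain uniformly $o(1)$ over a window whose size grows with $r$, and dovetailing this with the rougher Gaussian-type bound on $\pi^r(-b)$ used to kill the left tail. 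Beyond that, the argument is a routine one-dimensional computation.
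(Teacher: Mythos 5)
Your proposal is correct and follows essentially the same route as the paper: an explicit birth--death stationary distribution via products of rate ratios, a logarithmic expansion giving the Gaussian profile uniformly over windows of width $O(\sqrt{r})$, geometric tail bounds outside the bulk (with the left tail below $-b$ killed by $f(r)/\sqrt{r}\to\infty$), and uniform integrability to upgrade weak convergence to convergence of the half-moments. The only cosmetic difference is that the paper pre-normalizes the unnormalized weights at $0$ and verifies their total mass tends to $1$, whereas you normalize at the end; the substance is identical.
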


Before giving a formal proof, we note that the lemma is very natural. Indeed, the process $\hat Y^r(t)$ is a birth and death process. Furthermore, note that,
{\em if we assume that $b=f/\gamma$ and $X_*^r/\gamma$ are integer},  the process $\tilde Y^r(t) = X_*^r/\gamma - \hat Y^r(t)$ is nothing else but the process describing the number of customers in an $M/M/N$ queueing system with the arrival rate $r$, the service rate (by each server) $\beta\gamma$, and the number of servers $N= X_*^r/\gamma + f/\gamma$. The latter system's offered load 
$r/(\beta\gamma)= X_*^r/r$, and therefore the number of servers exceeds the offered load by $b=f/\gamma$, where, recall, $f/\sqrt{r} \to \infty$.
Then, it is, of course, very natural that for the ``diffusion-scaled'' process
$$
\frac{\tilde Y^r(\infty) - r/{\beta\gamma}}{\sqrt{r}} \left( \equiv \frac{- \hat Y^r(\infty)}{\sqrt{r}} \right)
\Rightarrow \mathcal N(0,(\beta\gamma)^{-1}).
$$
Nevertheless, we give a proof of Lemma~\ref{lem-stylized} because, first, we could not find a result in the literature, which covers our specific case, where in 
addition to \eqn{eq-conv-weak-styl} we need \eqn{eq-conv-mean-styl}; and, second, we need a proof that applies even if the integrality assumption
on $b=f/\gamma$ and $X_*^r/\gamma$ does not hold.

\proof{Proof of Lemma~\ref{lem-stylized}:}
This proof applies to the general case when $b=b(r)$ and $X_*^r/\gamma$ are {\em not} 
necessarily integer. However, to avoid clogged notation, within this proof, let us assume that $b=b(r)$ and $X_*^r/\gamma$ (and then $X_*^r$) happen to be integer -- this makes $X^r(t)$ and $T^r(t)$ integer-valued. 
(Otherwise, we would have to write $\lceil \hat X^r(t) \rceil$, $\lceil \hat T^r(t) \rceil$, $\lceil X_*^r + \gamma b \rceil$, 
$\lceil X_*^r- \gamma \hat Y^r(t) \rceil$,
$\lceil X_*^r/\gamma \rceil$, instead of 
$\hat X^r(t)$, $\hat T^r(t)$, $X_*^r + \gamma b$, $X_*^r- \gamma \hat Y^r(t)$, $X_*^r/\gamma$, respectively, everywhere in the proof.)

We have that
$$
\hat Z^r(t) = \hat T^r(t) =  (X_*^r- \gamma \hat Y^r(t))  \wedge (X_*^r+\gamma b)
$$
is (an integer) confined to interval $[0 , X_*^r+\gamma b ]$  at all times. The process $\hat Y^r(t)$ is confined at all times to the integers in 
$(-\infty, X_*^r/\gamma]= (-\infty, r/(\beta\gamma)]= (-\infty, \alpha r]$, where we use notation $\alpha = 1/(\beta\gamma)$. This is a Markov birth and death process with the ``down-transition'' $(i\to i-1)$ rate from any state $i$ being constant $r$; the ``up-transition'' $(i\to i+1)$ rates are as follows
\beql{eq-up-trans}
\left\{
  \begin{array}{ll}
  r-i/\alpha, & \mbox{if}~           - b  \le i \le \alpha r\\
  r+b/\alpha, &  \mbox{if}~            i < -b
  \end{array}
\right.
\eeql
Recall that $b=f/\gamma$, and then, by assumption \eqn{eq-f-growth}, $b/r\to 0$ and $b/\sqrt{r} \to \infty$ as $r\to\infty$.

Denote $\pi^r(i) = \pr\{\hat Y^r(\infty)=i\}$, that is $\{\pi^r(i)\}$ is the stationary distribution of $\hat Y^r(\cdot)$ for a given $r$.
Let $\{p^r(i)\}$ denote the scaled version of $\{\pi^r(i)\}$, such that 
$$
p^r(0) = \frac{1}{\sqrt{2\pi \alpha}}  \frac{1}{\sqrt{r}};
$$
in other words,
$$
p^r(i) = \frac{p^r(0)}{\pi^r(0)}  \pi^r(i).
$$
Obviously,
$$
\pi^r(i) = \frac{p^r(i)}{\sum_j p^r(j)}.
$$
Fix any $a>0$ and any $\nu>1$. Consider an integer $1\le i \le a\sqrt{r}$; note that $i/r \to 0$ as $r\to\infty$ uniformly on all such $i$. We have
$$
\frac{p^r(i)}{p^r(0)} = \prod_{j=1}^i \frac{r-j/\alpha}{r} = \prod_{j=1}^i (1-\frac{j}{\alpha r}).
$$
Then for all sufficiently large $r$, uniformly in $1\le i \le a\sqrt{r}$,
$$
- \nu \sum_{j=1}^i \frac{j}{\alpha r} \le \log \frac{p^r(i)}{p^r(0)} \le - \sum_{j=1}^i \frac{j}{\alpha r}.
$$
The latter sum is estimated as
$$
\frac{i^2}{2\alpha r} = \int_0^{i} \frac{\xi}{\alpha r} d\xi \le \sum_{j=1}^i \frac{j}{\alpha r} \le \int_0^{i+1} \frac{\xi}{\alpha r} d\xi = \frac{(i+1)^2}{2\alpha r}.
$$
Combining these estimates we conclude that, for all large $r$, uniformly in $1\le i \le a\sqrt{r}$,
\beql{eq-prob-density}
\frac{p^r(i)}{p^r(0)} \to \exp\left( -\frac{(i/\sqrt{r})^2}{2\alpha}\right).
\end{equation}
Then from \eqn{eq-prob-density} we obtain that, uniformly in $0 < c \le a$,
\beql{eq-prob-cdf}
\sum_{j=1}^{c\sqrt{r}} p^r(j) \to \int_0^{c} \phi(\xi)d \xi,
\end{equation}
where $\phi(\xi)$ is the density of $\mathcal{N}(0,\alpha)$.

We also obtain ``right tail'' estimates. Observe that for $j \ge i$, the sequence $p^r(j)$ decreases at least geometrically,
$$
p^r(j+1) = p^r(j) (1-\frac{j+1}{\alpha r})  \le p^r(j) (1-\frac{i}{\alpha r}). 
$$
Therefore,
\beql{eq-prob-tail}
\sum_{j\ge i} p^r(j) \le p^r(i) \frac{\alpha r}{i} \le 2 p^r(0) \exp\left( -\frac{(i/\sqrt{r})^2}{2\alpha}\right) \frac{\alpha r}{i} = \frac{2}{\sqrt{2\pi \alpha}} \exp\left( -\frac{(i/\sqrt{r})^2}{2\alpha}\right)\frac{\alpha}{i/\sqrt{r}}
\end{equation}
and
$$ 
\E \left[\hat Y^r(\infty) \mi\{\hat Y^r(\infty) > i\}\right] \le [\sum_{j\ge i} \pi^r(j)] \left[i + \frac{\alpha r}{i}  \right],
$$ 
which in turn implies that, for all large $r$, uniformly in $1\le i \le a\sqrt{r}$,
\beql{eq-mean-tail}
\E \left[\hat Y^r(\infty) \mi\{\hat Y^r(\infty) > i\}\right] 
\le \frac{\pi^r(0)}{p^r(0)} \frac{2}{\sqrt{2\pi \alpha}} \exp\left( -\frac{(i/\sqrt{r})^2}{2\alpha}\right)\frac{\alpha}{i/\sqrt{r}}
\left[i + \frac{\alpha r}{i}  \right].
\end{equation}

From \eqn{eq-prob-cdf} and \eqn{eq-prob-tail}
we see that for any $\epsilon>0$ we can choose $a>0$ sufficiently large, so that
$$
\lim_{r\to\infty} \sum_{1\le j \le a\sqrt{r}} p^r(j) = \int_0^{a} \phi(\xi)d \xi \in (1/2-\epsilon, 1/2]
$$
and 
$$
\limsup_{r\to\infty} \sum_{j > a\sqrt{r}} p^r(j) < \epsilon.
$$
Now, the estimates analogous to \eqn{eq-prob-density}, \eqn{eq-prob-cdf}, \eqn{eq-prob-tail}, \eqn{eq-mean-tail} can be similarly obtained for negative values of $i$, namely for $ -a\sqrt{r} \le i \le 1$. (The estimates are essentially same, with $|i|$ replacing $i$, and minor adjustments.)
Then we see that for any $\epsilon>0$ there exists a sufficiently large $a>0$ such that
$$
\lim_{r\to\infty} \sum_{|j| \le a\sqrt{r}} p^r(j) = 2\int_0^{a} \phi(\xi)d \xi \in (1-2\epsilon, 1]
$$
and 
$$
\limsup_{r\to\infty} \sum_{|j| > a\sqrt{r}} p^r(j) < 2\epsilon.
$$
Since this is true for any $\epsilon>0$, we conclude that
$$
\lim_{r\to\infty} \sum_j p^r(j) \to 1,
$$
and therefore the ratio $\pi^r(j)/p^r(j)$ (which is same for all $j$) converges to $1$ as well. Then, \eqn{eq-prob-cdf} (along with its analog for negative $i$) implies the weak convergence \eqn{eq-conv-weak-styl}. 

Finally, for any fixed $a>0$, it follows from \eqn{eq-mean-tail} (along with $\pi^r(0)/p^r(0)\to 1$) that
$$
\limsup_{r\to\infty} \E \left[\frac{\hat Y^r(\infty)}{\sqrt{r}} \mi \left\{ \frac{\hat Y^r(\infty)}{\sqrt{r}} > a \right\} \right] \le 
\frac{2}{\sqrt{2\pi \alpha}} \exp\left( -\frac{a^2}{2\alpha}\right)\frac{\alpha}{a}
\left[a + \frac{\alpha}{a}  \right].
$$
The right-hand side can be made arbitrarily small by making $a$ large. Therefore,
$[\hat Y^r(\infty)/\sqrt{r}]^+$ is uniformly integrable. Uniform integrability of $[\hat Y^r(\infty)/\sqrt{r}]^-$ 
is obtained similarly, using the version of \eqn{eq-mean-tail} for negative $i$. 
This completes the proof of \eqn{eq-conv-mean-styl}.
\Halmos
\endproof

From Lemma~\ref{lem-stylized} we have the following corollary which will be used later. 

\begin{cor}
\label{cor2}
For any $\epsilon>0$ there exists a sufficiently large fixed $c>0$, such that, uniformly in $r$,
\beql{eq-tail}
\E [-c - \hat Y^r(\infty)/\sqrt{r}]^+ \le \epsilon.
\end{equation}
\end{cor}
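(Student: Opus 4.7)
My proposal is that this corollary is essentially an immediate consequence of the uniform integrability of $[\hat Y^r(\infty)/\sqrt{r}]^-$, which was already established en route to proving \eqn{eq-conv-mean-styl} in Lemma~\ref{lem-stylized}. So the plan is to extract that uniform integrability and apply an elementary pointwise bound.

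First I would observe the elementary inequality: for any real number $y$ and any $c > 0$,
\[
[-c - y]^+ = (-c - y)\mi\{y < -c\} \le (-y)\mi\{y < -c\} = y^- \mi\{y^- > c\}.
\]
Taking $y = \hat Y^r(\infty)/\sqrt{r}$ and expectations gives
\[
\E[-c - \hat Y^r(\infty)/\sqrt{r}]^+ \le \E\!\left[\frac{(\hat Y^r(\infty))^-}{\sqrt{r}} \mi\!\left\{\frac{(\hat Y^r(\infty))^-}{\sqrt{r}} > c\right\}\right].
\]

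Next I would invoke the analog of the tail estimate \eqn{eq-mean-tail} for negative indices, which was derived (and explicitly referenced) in the proof of Lemma~\ref{lem-stylized}. That estimate yields, uniformly in $r$,
\[
\limsup_{r \to \infty} \E\!\left[\frac{(\hat Y^r(\infty))^-}{\sqrt{r}} \mi\!\left\{\frac{(\hat Y^r(\infty))^-}{\sqrt{r}} > a\right\}\right] \le \frac{2}{\sqrt{2\pi\alpha}} \exp\!\left(-\frac{a^2}{2\alpha}\right) \frac{\alpha}{a}\left[a + \frac{\alpha}{a}\right],
\]
where $\alpha = 1/(\beta\gamma)$; this right-hand side tends to $0$ as $a \to \infty$. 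This (together with the convergence \eqn{eq-conv-weak-styl} ensuring no mass escapes at finite $r$) gives the uniform integrability of $[\hat Y^r(\infty)/\sqrt{r}]^-$ across all $r$: for any $\epsilon > 0$ we can pick $c > 0$ large enough that the supremum over $r$ of the above expectation with $a$ replaced by $c$ is at most $\epsilon$.

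Combining the two steps yields $\sup_r \E[-c - \hat Y^r(\infty)/\sqrt{r}]^+ \le \epsilon$, which is \eqn{eq-tail}. There is no real obstacle here: the only thing to be careful about is that uniform integrability was established ``for all sufficiently large $r$'' inside the proof of Lemma~\ref{lem-stylized}, but for the finitely many small $r$ left over, $\hat Y^r(\infty)$ has finite support bounded in terms of $r$ (it lives in $(-\infty, \alpha r]$ and is positive recurrent), so each individual $\E|\hat Y^r(\infty)|/\sqrt{r}$ is finite and contributes a negligible amount to $[-c - \hat Y^r(\infty)/\sqrt{r}]^+$ once $c$ is chosen large enough to dominate these finitely many expectations.
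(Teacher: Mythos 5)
Your argument is correct and is essentially the paper's own: Corollary~\ref{cor2} is stated there as a direct consequence of Lemma~\ref{lem-stylized}, whose proof establishes exactly the uniform integrability of $[\hat Y^r(\infty)/\sqrt{r}]^-$ via the negative-index analog of \eqn{eq-mean-tail}, and your pointwise bound $[-c-y]^+ \le y^-\,\mi\{y^->c\}$ is the standard way to convert that into \eqn{eq-tail}. One small slip: for fixed small $r$ the support of $\hat Y^r(\infty)$ is \emph{not} finite (it is all integers in $(-\infty,\lceil X_*^r/\gamma\rceil]$), but the stationary distribution has a geometric lower tail below $-b$, so $\E|\hat Y^r(\infty)|<\infty$ for each $r$ and your handling of the finitely many remaining $r$ still goes through.
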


\subsection{Comparison between the GBS process and artificial process}
\label{sec-compare} 

To compare the GBS and artificial processes, we will use coupling, specifically the following simple fact. 

\begin{prop}
\label{prop-coupling2}
Consider two countable state-space, continuous-time Markov chains $S^{(i)}(t), ~t\ge 0,$ where $t$ is time and $i=1,2$ is the chain index. 
Suppose there exist integer-valued projections 
$\Pi^{(i)}(t)=A^{(i)}(S^{(i)}(t)), ~i=1,2,$ (i.e. each $A^{(i)}$ is a deterministic integer-valued mapping) such that processes
$S^{(i)}(t), ~i=1,2,$ satisfy the following conditions:\\
(a) For each $i=1,2$, any transition (jump) of chain $S^{(i)}(t)$ is such that the projection $\Pi^{(i)}(t)$ is either
increasing by 1 or decreasing by 1.\\
(b) For each $i=1,2$, for the states $s^{(i)}$ of process $S^{(i)}(t)$ with fixed projection $A^{(i)}(s^{(i)})=j$, 
denote by $\lambda^{(i)}_j(s^{(i)})$ [respectively, $\mu^{(i)}_j(s^{(i)})$] the total rate of all transitions out of state $s^{(i)}$, resulting in
the increase  [respectively, decrease] of projection $\Pi^{(i)}$ by $1$. Then, for any $j$, uniformly on any fixed states $s^{(1)}$ and $s^{(2)}$
such that $A^{(1)}(s^{(1)})=A^{(2)}(s^{(2)})=j$,
\beql{eq-rate-order}
\lambda^{(1)}_j(s^{(1)}) \le \lambda^{(2)}_j(s^{(2)}), ~~~ \mu^{(1)}_j(s^{(1)}) \ge \mu^{(2)}_j(s^{(2)}).
\eeql
Then the following holds.

(i) If the initial states $S^{(i)}(0)$, $i=1,2$, are such that $\Pi^{(1)}(0) \le \Pi^{(2)}(0)$, then both Markov chains can be coupled (constructed on a common probability space) so that $\Pi^{(1)}(t) \le \Pi^{(2)}(t)$ for all $t\ge 0$.

(ii) If both Markov chains are irreducible and positive recurrent, then $\Pi^{(1)}(\infty) \le_{st} \Pi^{(2)}(\infty)$, where $\Pi^{(i)}(\infty)$ is a random value of $\Pi^{(i)}(t)$ in the (unique) stationary regime. 
\end{prop}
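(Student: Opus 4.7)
The plan is to prove (i) by exhibiting an explicit Markovian coupling of the two chains on the product space $\mathcal{S}^{(1)} \times \mathcal{S}^{(2)}$ whose marginals reproduce $S^{(1)}$ and $S^{(2)}$ and under which $\Pi^{(1)}(t) \le \Pi^{(2)}(t)$ is preserved pathwise; then (ii) follows by running this coupling from any pair of initial states with $\Pi^{(1)}(0) \le \Pi^{(2)}(0)$, applying stochastic order pointwise in $t$, and passing to the limit as $t \to \infty$ via convergence to the unique stationary distributions.

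For the coupling, I would split the joint dynamics at the diagonal $\{\Pi^{(1)} = \Pi^{(2)}\}$ and its complement. Off-diagonal, let the two chains evolve with independent driving clocks; because each chain is a continuous-time jump process, simultaneous jumps in the two chains have probability zero, so the strict inequality $\Pi^{(1)}(t) < \Pi^{(2)}(t)$ cannot be reversed before the next visit to the diagonal. On the diagonal, with $\Pi^{(1)} = \Pi^{(2)} = j$ and current states $S, S'$, I would decompose the joint generator into four transition types, using the per-state transition rates $q^{(i)}(\cdot,\cdot)$ of the original chains: (a) ``joint up'' at total rate $\lambda^{(1)}_j$, with chain 1 going to $S_1'$ and chain 2 to $S_2'$ at intensity $q^{(1)}(S, S_1')\,q^{(2)}(S', S_2')/\lambda^{(2)}_j$ over targets $(S_1',S_2')$ that project to $j+1$; (b) ``chain-2-only up'' at total rate $\lambda^{(2)}_j - \lambda^{(1)}_j \ge 0$, with chain 2 transitions distributed proportionally to $q^{(2)}(S', \cdot)$; (c) ``joint down'' at total rate $\mu^{(2)}_j$, symmetric to (a); and (d) ``chain-1-only down'' at total rate $\mu^{(1)}_j - \mu^{(2)}_j \ge 0$, symmetric to (b). The hypothesized inequalities $\lambda^{(1)}_j \le \lambda^{(2)}_j$ and $\mu^{(1)}_j \ge \mu^{(2)}_j$ guarantee non-negativity of the rates in (b) and (d).

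A straightforward summation then verifies that the induced marginal transition rate of chain $i$ at any product state equals the prescribed $q^{(i)}(S^{(i)}, \cdot)$, so that the construction is genuinely a Markovian coupling. Critically, of the diagonal transitions (a)--(d), types (a) and (c) keep the projections equal while types (b) and (d) produce $\Pi^{(1)} < \Pi^{(2)}$; together with the off-diagonal observation, this gives $\Pi^{(1)}(t) \le \Pi^{(2)}(t)$ for all $t \ge 0$ almost surely whenever $\Pi^{(1)}(0) \le \Pi^{(2)}(0)$, proving (i). For (ii), start the coupling from any pair of deterministic states with $\Pi^{(1)}(0) = \Pi^{(2)}(0)$; by (i), $\Pi^{(1)}(t) \le_{st} \Pi^{(2)}(t)$ for every $t$, and by irreducibility and positive recurrence $\Pi^{(i)}(t) \Rightarrow \Pi^{(i)}(\infty)$, so stochastic order passes to the limit.

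The main obstacle is not conceptual but bookkeeping: I must confirm that the splitting yields a bona fide continuous-time Markov chain on the product space (finite total exit rates from every state) and that the ``independent driving'' off-diagonal glues consistently with the on-diagonal splitting at the moment the two projections first collide. Both points follow from the standard Poisson-thinning construction (cf.~\cite{Lindvall}): the coupled exit rate from any product state is just $\lambda^{(1)}_j + \mu^{(1)}_j + \lambda^{(2)}_{j'} + \mu^{(2)}_{j'}$, which is finite since each original chain is well-defined, and the strong Markov property applied at the diagonal hitting times patches the two regimes into a single Markov process on $\mathcal{S}^{(1)} \times \mathcal{S}^{(2)}$.
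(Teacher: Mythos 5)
Your construction is correct: the four-way splitting of the generator on the diagonal reproduces both marginals (summing the type (a) and (b) contributions gives exactly $q^{(2)}(S',\cdot)$, and symmetrically for (c) and (d)) while manifestly preserving $\Pi^{(1)}\le\Pi^{(2)}$, and the independent off-diagonal evolution cannot reverse a strict integer gap in a single $\pm 1$ jump. The paper omits this proof entirely, describing it as a standard coupling argument in the sense of Lindvall, and your explicit construction is precisely that standard argument, so you have simply filled in the details the authors chose to leave out.
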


\proof{Proof:} Statement (ii) is a corollary of (i), so only (i) needs to be proved.
The proof uses standard coupling techniques (cf. \cite{Lindvall}). The coupling is such that we let the processes $S^{(1)}(t)$
and $S^{(2)}(t)$ ``run'' independently until the first time $\tau\ge 0$, when $\Pi^{(1)}(\tau) = \Pi^{(2)}(\tau)$. Denote $j=\Pi^{(1)}(\tau) = \Pi^{(2)}(\tau)$, and $s^{(i)}=S^{(i)}(\tau)$, $i=1,2$. Denote $\rho=\max_{i=1,2} [\lambda^{(i)}_j(s^{(i)}) + \mu^{(i)}_j(s^{(i)})]$.

Starting time $\tau$, we will couple the two processes until the first time $\tau_1 > \tau$, when a transition occurs in $S^{(1)}$ or $S^{(2)}$ or both of them simultaneously, as follows. We draw an independent realization $\zeta$ of an exponential random variable with mean $1/\rho$, and let
$\tau_1 = \tau+\zeta$. Denote $E^{(i)}=\Pi^{(i)}(\tau_1)- \Pi^{(i)}(\tau_1 -)$; in other words, $E^{(i)}$ is the random variable equal to the increment of $\Pi^{(i)}$ at time $\tau_1$. (Note that $E^{(i)}=0$ means that there is {\em no} jump of $S^{(i)}$ at time $\tau_1$.) 
Using \eqn{eq-rate-order}, we can draw a realization of the pair $(E^{(1)},E^{(2)})$ in a way such that
$$
\pr\{E^{(i)}=1\} = \frac{\lambda^{(i)}_j(s^{(i)})}{\rho}, ~\pr\{E^{(i)}=-1\} = \frac{\mu^{(i)}_j(s^{(i)})}{\rho}, 
~\pr\{E^{(i)}=0\} =1- \frac{\lambda^{(i)}_j(s^{(i)})+\mu^{(i)}_j(s^{(i)})}{\rho}, ~~i=1,2,
$$
and $E^{(1)}\le E^{(2)}$. Now, separately for $i=1,2$, conditioned on $S^{(i)}(\tau)=s^{(i)}$ and realization $E^{(i)}$, we draw a realization
of $S^{(i)}(\tau_1)$. This completes the construction of the processes up to time $\tau_1$. By construction, 
$\Pi^{(1)}(\tau_1) \le \Pi^{(2)}(\tau_1)$. 

Now, if $\Pi^{(1)}(\tau_1) = \Pi^{(2)}(\tau_1)$, we keep the two processes coupled until the next time $\tau_2 > \tau_1$ when a transition occurs in either process, by using the construction analogous to that we used for the interval $[\tau,\tau_1]$. If $\Pi^{(1)}(\tau_1) < \Pi^{(2)}(\tau_1)$,
we let the processes $S^{(1)}(t)$ and $S^{(2)}(t)$ ``run'' independently until the next time $\tau' > \tau_1$ 
when $\Pi^{(1)}(\tau') = \Pi^{(2)}(\tau')$. And so on.

With this construction it is straightforward to see that each process $S^{(i)}(t)$ observes its transition rates and $\Pi^{(1)}(t) \le \Pi^{(2)}(t)$ for all $t\ge 0$ with probability $1$. We omit further details. 
\Halmos
\endproof

\begin{lem}
\label{lem-comp1}
The following stochastic order relations hold:
\beql{eq-comp1}
\hat Y^r(\infty) \le_{st} Y^r(\infty),
\end{equation}
\beql{eq-comp2}
\hat T^r(\infty) \ge_{st} T^r(\infty).
\end{equation}
\end{lem}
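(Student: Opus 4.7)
\textbf{Proof proposal for Lemma~\ref{lem-comp1}.}

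My plan is to first observe that \eqn{eq-comp2} is a free consequence of \eqn{eq-comp1}. Both processes use the same deterministic formula $T = [(X_*^r - \gamma Y) \wedge (X_*^r + f)] \vee 0$ to obtain the (truncated) target from the inventory level, and this map is non-increasing in $Y$. Therefore, if we can couple the two steady-state laws so that $\hat Y^r(\infty) \le Y^r(\infty)$ almost surely, then $\hat T^r(\infty) = T(\hat Y^r(\infty)) \ge T(Y^r(\infty)) = T^r(\infty)$ under the same coupling, which yields \eqn{eq-comp2}. So essentially all the work is in proving \eqn{eq-comp1}.

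For \eqn{eq-comp1}, the natural tool is Proposition~\ref{prop-coupling2}. I would take the artificial chain as $S^{(1)}(t) = \hat Y^r(t)$ (with trivial projection $\Pi^{(1)} = \hat Y^r$) and the GBS chain as $S^{(2)}(t) = (Y^r(t), Z^r(t))$ with projection $\Pi^{(2)} = Y^r$. The crucial structural remarks are: (a) under the exponential lead-time assumption the item arrival rate is $\beta \hat Z^r$ in the artificial process and $\beta Z^r$ in the GBS process, so both are continuous-time Markov chains; (b) each arrival event in the GBS system changes $Y^r$ by exactly $\pm 1$ (the simultaneous policy-triggered orders only perturb $Z^r$ and thus do not create jumps of $\Pi^{(2)}$ without a $\pm 1$ change), so the ``$\pm 1$ projection jumps'' hypothesis of Proposition~\ref{prop-coupling2} is satisfied for both chains; (c) GBS maintains the invariant $Z^r(t) \ge \lceil T^r(t) \rceil$ at all post-event times, which is exactly what is needed.

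Now I would verify the rate-inequality hypothesis. Condition on any state of each chain with $\Pi^{(1)} = \Pi^{(2)} = y$. Both chains have down-rate $r$ (customer arrivals), so $\mu_y^{(1)} = \mu_y^{(2)}$ and in particular $\mu_y^{(1)} \ge \mu_y^{(2)}$. The up-rates are $\lambda_y^{(1)} = \beta \hat Z^r = \beta \lceil T(y) \rceil$ for the artificial chain (since $\hat Z^r$ is a deterministic function of $\hat Y^r$) and $\lambda_y^{(2)} = \beta z$ for the GBS chain in any state $(y, z)$. By the invariant $z \ge \lceil T(y) \rceil$, we have $\lambda_y^{(1)} \le \lambda_y^{(2)}$. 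Both chains are irreducible and, for large enough $r$, positive recurrent (stated earlier for GBS, and immediate for the birth--death artificial process). Proposition~\ref{prop-coupling2}(ii) then gives $\hat Y^r(\infty) \le_{st} Y^r(\infty)$, which is \eqn{eq-comp1}, and monotonicity of $T(\cdot)$ gives \eqn{eq-comp2}.

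I do not expect a serious obstacle here. The one technical point that deserves care is checking that the GBS jumps really respect the ``projection changes by $\pm 1$'' requirement even though a single arrival epoch can induce a multi-unit change in $Z^r$ through the policy's reorder step; but since $Y^r$ itself changes only by $\pm 1$ at each such epoch and Proposition~\ref{prop-coupling2} compares \emph{projections}, the argument goes through cleanly. The use of the exponential lead-time assumption is also essential at the single point where we invoke that the item arrival rate is $\beta Z^r$ (respectively $\beta \hat Z^r$), which is what makes both systems Markovian in their stated state spaces.
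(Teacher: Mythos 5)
Your proposal is correct and follows essentially the same route as the paper: both apply Proposition~\ref{prop-coupling2} to the pair $\hat Y^r(\cdot)$ and $[Y^r(\cdot),Z^r(\cdot)]$ with projections $\hat Y^r$ and $Y^r$, compare the common down-rate $r$ with the up-rates $\beta\lceil T^r\rceil \le \beta Z^r$ via the invariant $Z^r(t)\ge\lceil T^r(t)\rceil$, and then deduce \eqn{eq-comp2} from \eqn{eq-comp1} by monotonicity of the map from $Y^r$ to $T^r$.
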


\proof{Proof:}
Denote by $\eta(\cdot)$ the deterministic mapping that takes $Y^r$ into $\lceil T^r \rceil $ (and $\hat Y^r$ into $\lceil \hat T^r \rceil $),
namely $\eta(j) = \lceil [(X_*^r - \gamma j) \wedge (X_*^r +f(r))] \vee 0 \rceil$.
Then, by the definition of the processes,
$\hat Z^r(t) = \eta(\hat Y^r(t))$, while $Z^r(t)$ is such that $Z^r(t) \ge  \eta(Y^r(t))\equiv \lceil T^r(t) \rceil$. 
Therefore, $\hat Y^r(t)$ is a simple birth-death process: when $\hat Y^r(t)=j$, the rate of ``down'' transition from $j$ to $j-1$ is $\mu_j = r$,
and the rate of ``up" transition from $j$ to $j+1$ is $\lambda^{(1)}_j = \beta \eta(j)$. 
The structure of the process $Y^r(\cdot)$ (which is a projection of the Markov process $[Y^r(\cdot),Z^r(\cdot)]$) 
 is different: when $Y^r(t)=j$, the rate of ``down'' transition from $j$ to $j-1$ is also $\mu_j =r$, but the rate of ``up" transition from $j$ to $j+1$ depends on
 $Z^r(t)$, namely $\lambda^{(2)}_j=\beta Z^r(t)$.
Note that $\beta Z^r(t) \ge \beta \eta(j)$, and therefore  $\lambda^{(2)}_j\ge \beta \eta(j) = \lambda^{(1)}_j$.

We can apply Proposition~\ref{prop-coupling2} to the processes  $\hat Y^r(\cdot)$ and $[Y^r(\cdot),Z^r(\cdot)]$,
and their projections $\hat Y^r(\cdot)$ and $Y^r(\cdot)$, to obtain  \eqn{eq-comp1}.
This, in turn, implies \eqn{eq-comp2}, because the (deterministic) mapping 
from $Y^r$ to $T^r$ (which is same as from $\hat Y^r$ to $\hat T^r$)
is non-increasing.
\Halmos
\endproof

We will also need the following fact. For some finite $C>0$, uniformly in $r$,
\beql{eq-111}
\E \lceil \hat T^r(\infty) \rceil - \E \lceil T^r(\infty) \rceil \le C.
\end{equation}
Indeed, recall that $\lceil \hat T^r(t)  \rceil \equiv \hat Z^r(t)$ and,
by the conservation laws \eqn{eq-conserv-gbs} and \eqn{eq-conserv-sty}, $\E Z^r(\infty)=\E \hat Z^r(\infty)$.
Therefore, we can write
$$
\E \lceil \hat T^r(\infty)  \rceil - \E \lceil T^r(\infty) \rceil = \E \hat Z^r(\infty) - \E Z^r(\infty) + \E Z^r(\infty) - \E \lceil T^r(\infty)  \rceil = 
$$
$$
 \E Z^r(\infty) - \E \lceil T^r(\infty)  \rceil =\E D^r(\infty) < C.
$$

\subsection{Convergence in distribution and stochastic order}
\label{subsec-conv-stoch-order}

We will need the following simple fact.

\begin{lem}
\label{lem-conv-stoch-order}
Suppose a sequence of random variables $A_r$, $r\to\infty$, is such that 
\beql{eq-11}
A_r \Rightarrow A,
\eeql
where $A$ has a well-defined finite mean, i.e. $\E |A| < \infty$, and, moreover,
\beql{eq-22}
\E A_r^+ \to \E A^+, ~~~ \E A_r^- \to \E A^-.
\eeql
Suppose there is another sequence of random variables $B_r$, $r\to\infty$, is such that 
$$
A_r \le_{st} B_r, ~~\forall r, ~~~\mbox{and} ~~ \E B_r - \E A_r \to 0.
$$
Then,
\beql{eq-777}
B_r \Rightarrow A
\eeql
and, moreover,
\beql{eq-888}
\E B_r^+ \to \E A^+, ~~~ \E B_r^- \to \E A^-.
\eeql
\end{lem}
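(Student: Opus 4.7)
The proof plan is essentially a one-shot application of monotone coupling plus Slutsky. Since $A_r \le_{st} B_r$, I invoke the classical fact that stochastic dominance admits a monotone coupling: one can realize $A_r$ and $B_r$ on a common probability space (for each $r$) with $A_r \le B_r$ almost surely. Under this coupling, $\Delta_r \doteq B_r - A_r \ge 0$ a.s., and by hypothesis $\E \Delta_r \to 0$. In particular $\Delta_r \to 0$ in $L^1$, hence in probability.

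For the weak convergence \eqn{eq-777}, I write $B_r = A_r + \Delta_r$, where $A_r \Rightarrow A$ and $\Delta_r \to 0$ in probability. Slutsky's theorem then immediately yields $B_r \Rightarrow A$.

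For the convergence of positive and negative parts \eqn{eq-888}, the key observation is that the coupling $A_r \le B_r$ implies pointwise $B_r^+ \ge A_r^+$ and $B_r^- \le A_r^-$ (checking the three sign-configurations for $(A_r,B_r)$). Using $B_r - A_r = B_r^+ - B_r^- - A_r^+ + A_r^-$, I get the decomposition
\[
\Delta_r = (B_r^+ - A_r^+) + (A_r^- - B_r^-),
\]
where \emph{both} summands are nonnegative. Taking expectations, $\E(B_r^+ - A_r^+) + \E(A_r^- - B_r^-) = \E \Delta_r \to 0$, so each nonnegative term separately tends to zero. Combined with the given $\E A_r^+ \to \E A^+$ and $\E A_r^- \to \E A^-$, this gives $\E B_r^+ \to \E A^+$ and $\E B_r^- \to \E A^-$.

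There is no real obstacle here: the only non-trivial ingredient is the monotone-coupling characterization of $\le_{st}$, which is classical (e.g.\ Lindvall), and everything else is elementary decomposition. I would not even need to appeal to Slutsky separately, since $L^1$ convergence of $\Delta_r$ to $0$ already gives $B_r - A_r \to 0$ in probability; the weak convergence \eqn{eq-777} could alternatively be deduced directly from \eqn{eq-888} (the convergence of $\E B_r^\pm$, combined with $A_r \le B_r$ and $A_r \Rightarrow A$, pins down the limiting distribution via tightness on both tails), but the Slutsky route is cleaner.
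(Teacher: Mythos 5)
Your proof is correct and follows essentially the same route as the paper's: monotone coupling of $A_r \le_{st} B_r$, deducing $B_r - A_r \Rightarrow 0$ from $\E(B_r-A_r)\to 0$ and nonnegativity, Slutsky for \eqref{eq-777}, and the pointwise inequalities $B_r^+ \ge A_r^+$, $B_r^- \le A_r^-$ for \eqref{eq-888}. Your identity $\Delta_r = (B_r^+ - A_r^+) + (A_r^- - B_r^-)$ with both summands nonnegative is just a slightly tidier packaging of the paper's two one-sided bounds $\E B_r^+ - \E A_r^+ \le \E B_r - \E A_r$ and $\E A_r^- - \E B_r^- \le \E B_r - \E A_r$.
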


\proof{Proof:}
For every $r$, $A_r$ and $B_r$ can be coupled, so that $A_r \le B_r$ w.p.1. Then, $B_r - A_r \ge 0$ and $\E (B_r - A_r) \to 0$, which implies $B_r - A_r \Rightarrow 0$,
which, along with \eqn{eq-11}, implies \eqn{eq-777}. Inequality $A_r \le B_r$ implies $B_r^+ - A_r^+ \le B_r - A_r$ 
and $A_r^- - B_r^- \le B_r - A_r$, and then
$$
\E B_r^+ - \E A_r^+ \le \E B_r - \E A_r ~~\mbox{and}~~ \E A_r^- - \E B_r^- \le \E B_r - \E A_r.
$$ 
Taking $r\to\infty$ limit and using \eqn{eq-22}, we obtain \eqn{eq-888}.
\Halmos
\endproof

\subsection{Conclusion of the proof of Theorem~\ref{thm-main}}
\label{subsec-proof-main}

Using Lemma~\ref{lem-stylized}, Lemma~\ref{lem-comp1}, and applying  Lemma~\ref{lem-conv-stoch-order}  to sequences $\hat Y^r(\infty)/\sqrt{r}$ and $Y^r(\infty)/\sqrt{r}$, we see that to prove the theorem it suffices to prove
\beql{eq-gap-vanish}
\E Y^r(\infty)/\sqrt{r} - \E \hat Y^r(\infty)/\sqrt{r} \to 0, ~~~r\to\infty.
\eeql
Before proceeding with the formal proof of \eqn{eq-gap-vanish}, let us discuss the key intuition behind it. 
We know from Lemma~\ref{lem-stylized} that, for large $r$,
in steady-state, $\hat X^r(\infty) = X_*^r -\gamma \hat Y^r(\infty)$ with high probability stays within the interval 
$[0, X_*^r + f(r)]$. (Because $\hat Y^r(\infty)/\sqrt{r}$ is close to normal, and both $X_*^r/\sqrt{r}\to\infty$ and $f(r)/\sqrt{r}\to\infty$.)
Then, ``typically'' $\hat T^r(\infty)=\hat X^r(\infty)$, and then $\gamma \hat Y^r(\infty) = X_*^r -\hat T^r(\infty)$. Suppose this is also 
``typically'' true for the GBS process (we do not know this in advance): $\gamma Y^r(\infty) = X_*^r - T^r(\infty)$. Assuming these ``typical,'' deterministic
relations between $Y^r(\infty)$ [resp., $\hat Y^r(\infty)$] and $T^r(\infty)$ [resp., $\hat T^r(\infty)$] hold always, we would have
\beql{eq-pretend}
\gamma \E Y^r(\infty) = X_*^r - \E T^r(\infty), ~~~\gamma \E \hat Y^r(\infty) = X_*^r - \E \hat T^r(\infty),
\eeql
and then
\beql{eq-mean-diff-pretend}
\frac{\gamma \E Y^r(\infty) - \gamma \E \hat Y^r(\infty)}{\sqrt{r}} = 
\frac{\E \hat T^r(\infty) - \E T^r(\infty)}{\sqrt{r}},
\eeql
which, in view of \eqn{eq-111}, gives \eqn{eq-gap-vanish}. However, for the GBS and artificial processes
the equalities $T^r(t)=X^r(t)$ and $\hat T^r(t)=\hat X^r(t)$ do {\em not} always hold, because the truncated targets $T^r(t)$ and $\hat T^r(t)$ are the projections of $T^r(t)$ and $\hat T^r(t)$ on the interval $[0, X_*^r + f(r)]$. As a result, 
instead of equalities \eqn{eq-pretend} we have equalities \eqn{eq-new2-add} and \eqn{eq-new2-add-artiv}, which contain additional ``error terms.'' Consequently, instead of equality \eqn{eq-mean-diff-pretend}, we have inequality \eqn{eq-mean-diff}. The fact that the additional terms
in the right hand side of \eqn{eq-mean-diff} also vanish, follows from Lemma~\ref{lem-deviation}  and Corollary~\ref{cor2}.

\proof{Proof of Theorem~\ref{thm-main}:}
Using Lemma~\ref{lem-stylized}, Lemma~\ref{lem-comp1}, and applying  Lemma~\ref{lem-conv-stoch-order}  to sequences $\hat Y^r(\infty)/\sqrt{r}$ and $Y^r(\infty)/\sqrt{r}$, we see that to prove the theorem it suffices to show \eqn{eq-gap-vanish}.

For any real numbers $q, q_1, q_2$ such that $q_1 \le q_2$, the following identity holds:
$$
q = [q \wedge q_2] \vee q_1 + (q-q_2)^+ - (q_1 - q)^+.
$$
Applying it to $q=\gamma Y^r(\infty), q_1=-f(r),q_2=X_*^r$, we 
obtain the identity
$$
\gamma Y^r(\infty) =
$$
\beql{eq-new1}
 [(\gamma Y^r(\infty) \wedge X_*^r) \vee (-f(r))] +  [\gamma Y^r(\infty) - X_*^r]^+ -  [-f(r)-\gamma Y^r(\infty)]^+.
\eeql
(Note that conditions $\gamma Y^r(\infty) > X_*^r$ and $\gamma Y^r(\infty) < -f(r)$ are equivalent to
$X^r(\infty) < 0$ and $X^r(\infty) > X_*^r + f(r)$, respectively.)
From the basic relations (\ref{eq-func1}) and (\ref{eq-func2}), we have
$$ 
T^r(\infty)=[X^r(\infty)\wedge (X_*^r+f(r))] \vee 0, ~~~ X^r(\infty) = X_*^r - \gamma Y^r(\infty), 
$$ 
from which
\beql{eq-br-2}
X_*^r - T^r(\infty) = (\gamma Y^r(\infty) \wedge X_*^r) \vee (-f(r))
\eeql
and
\beql{eq-br-3}
-X^r(\infty) = \gamma Y^r(\infty) - X_*^r.
\eeql
Substituting \eqn{eq-br-2} and \eqn{eq-br-3} into \eqn{eq-new1}, and taking the expectation, we obtain
\beql{eq-new2-add}
\gamma \E Y^r(\infty) = \E [X_*^r - T^r(\infty)] + \E [-X^r(\infty)]^+ - \E [-f(r)-\gamma Y^r(\infty)]^+.
\eeql
Observe that
\beql{eq-222}
[-X^r(t)]^+ \le \tilde D^r(t)+1.
\end{equation}
Indeed, from the definition of $\tilde D^r(t)$ in
\eqn{eq-d-defs}, using the facts that $Z^r(t) \ge 0$ and $\lceil T^r(t) \wedge X^r(t) \rceil \le X^r(t)+1$, we obtain
$\tilde D^r(t) \ge -(X^r(t)+1)$, and then $-X^r(t) \le \tilde D^r(t)+1$; applying $[\cdot]^+$ to both parts of the latter inequality,
since $\tilde D^r(t)+1 \ge 0$, we obtain \eqn{eq-222}.
Using \eqn{eq-222} in \eqn{eq-new2-add}
we obtain
\beql{eq-new2}
\gamma \E Y^r(\infty) \le \E [X_* - T^r(\infty)] +  \E  \tilde D^r(\infty) + 1.
\eeql

Completely analogously to \eqn{eq-new2-add}, for the artificial process, we obtain
\beql{eq-new2-add-artiv}
\gamma \E \hat Y^r(\infty) = \E [X_*^r - \hat T^r(\infty)] + \E [- \hat X^r(\infty)]^+ - \E [-f(r)-\gamma \hat Y^r(\infty)]^+,
\eeql
from which we have
\beql{eq-new3}
\gamma \E \hat Y^r(\infty) \ge \E [X_*^r - \hat T^r(\infty)] - \E [-f(r)-\gamma \hat Y^r(\infty)]^+.
\eeql
Subtracting  \eqn{eq-new3} from \eqn{eq-new2}, we have:
$$
\gamma \E Y^r(\infty) - \gamma \E \hat Y^r(\infty) \le
$$
$$
\E [X_*^r - T^r(\infty)] + \E  \tilde D^r(\infty) + 1
- \E [X_*^r - \hat T^r(\infty)]  + \E [-f(r)-\gamma \hat Y^r(\infty)]^+ =
$$
$$ 
[\E \hat T^r(\infty) - \E T^r(\infty)] + \E  \tilde D^r(\infty)+1 + \gamma \E [-f(r)/\gamma - \hat Y^r(\infty)]^+.
$$
Rescaling by factor $1/\sqrt{r}$, we obtain
$$
\frac{\gamma \E Y^r(\infty) - \gamma \E \hat Y^r(\infty)}{\sqrt{r}} \le 
$$
\beql{eq-mean-diff}
\frac{\E \hat T^r(\infty) - \E T^r(\infty)}{\sqrt{r}} + \frac{\E  \tilde D^r(\infty)+1}{\sqrt{r}} + \frac{\gamma \E [-f(r)/\gamma - \hat Y^r(\infty)]^+}{\sqrt{r}}.
\eeql
Taking $r\to\infty$ limit of the right hand side, and
applying \eqn{eq-111}, Lemma~\ref{lem-deviation}  and Corollary~\ref{cor2}, we obtain \eqn{eq-gap-vanish}.
\Halmos
\endproof

\section{Simulations}
\label{sec:numerical}

In this section we conduct simulations to evaluate the performance of GBS policy in a variety of scenarios.  
Our main theoretical results, Theorem~\ref{thm-main} and Corollary~\ref{cor1}, show that in the case of exponential lead times the 
cost advantage of GBS policy over CBS policy grows without bound, and the simulations do confirm that.
We also experiment with non-exponential lead times (not covered by Theorem~\ref{thm-main}), and show that the {\em significant} advantage of GBS policy prevails in many cases. (Significant is the key word here. The fact that GBS policy cannot perform worse than CBS policy is automatic, because CBS policy is a special case of it.)

{\em The choice of parameters.} We let $f=\infty$, that is the mapping from $X$ to $T$ is simply $T=X \vee 0$
(see the remark about parameter $f$ in Section~\ref{subsec:policy}). For each value of parameter $\gamma$ we choose the (centering) parameter $x_*$ as follows.
By Theorem~\ref{thm-main}, 
in the case of exponential lead time, for large $r$, the stationary net inventory level $Y(\infty)$ is distributed approximately as $\mathcal N(x_*,r/(\gamma \beta))$.
Then, it is reasonable to choose $x_*$ as the unique solution of the optimization problem
\begin{equation}
\label{eq:basexx}
\min_{x} \left\{\theta \mathbb E[N-x]^ + +h\mathbb E[x-N]^+\right\},
\end{equation}
where $N = \mathcal N(0,r/(\gamma \beta))$; this gives
$$
x_* = \Phi^{-1}\left(\frac{\theta}{h+\theta}\right)\sqrt{\frac{r}{\gamma \beta}},
$$
where $\Phi^{-1}()$ is the inverse of the standard Normal distribution.  
This is the value of $x_*$ we use for a given $\gamma$ in all our simulations; note that $x_*=0$ when $h=\theta$.
Therefore, the base level in all simulations is set to 
\begin{equation}
\label{eq:baseX}
X_{**}=X_*+ \gamma x_*=\frac{r}{\beta}+ \gamma  x_*.
\end{equation}

Under CBS policy ($\gamma=1$), the steady-state in-transit inventory $Z$ is Poisson distributed with mean $r/\beta$, for a general lead time distribution. Therefore, $X_{**}$, which is the constant base stock level, can be optimally chosen by 
\[
\min_{x} \left\{\theta \mathbb E[\Pi-x]^ + +h\mathbb E[x-\Pi]^+\right\},
\]
where $\Pi$ is the aforementioned Poisson random variable. In all cases below, the optimal values of $X_{**}$ chosen by the above coincide with integer parts of values calculated by (\ref{eq:baseX}) with $x_*$ determined by letting $\gamma=1$. This is not surprising given that the standard Normal distribution often provides a close approximation to a centered and scaled Poisson distribution.  

When $\gamma>1$,  according to Theorem~\ref{thm-main},  for GBS policy with a fixed $\gamma$ (and exponentially distributed lead time),  the choice of $X_{**}$ by (\ref{eq:baseX}) is asymptotically optimal as $r \rightarrow \infty$. For finite $r$ and/or non-exponential lead times, this value of $X_{**}$ is not necessarily optimal. Therefore, in all comparisons below, the inventory cost of GBS policy is determined by a roughly-chosen parameter value while the cost of CBS policy is determined by the best parameter value.   Nevertheless simulation results show that the advantage of GBS policy is significant.

We start with the base case, in which the lead time is exponentially distributed with mean $1/\beta=2$. We consider a series of systems with the demand rate $r$, which varies from $1$ to $1000$. Correspondingly, the mean lead time demand, $X_*=r/\beta$, ranges from $2$ to $2000$. 
We let $h=\theta=1$, so by (\ref{eq:baseX}),  $X_{**}=X_*=r/\beta$,
which is also the base stock level of CBS policy.

Performance of a policy is evaluated by the inventory (backlog+ inventory holding) cost per unit of time, that is by the simulation estimate of $\cc$.  Table \ref{tbl:report1} below compares their values, which shows GBS policy strongly dominates CBS policy, suggesting that replacing the latter can result in very significant reductions in inventory costs.  
The table also shows a clear trend  that as the demand arrival rate $r$ increases, a larger value of $\gamma$ should be chosen for implementing GBS policy, and the cost saving compared with CBS policy increases. While these conclusions are expected from Corollary~\ref{cor1}, the magnitude of the cost difference suggests that the advantage of GBS policy is not a niche effect. 
\begin{table}[ht]
\begin{center}
\begin{tabular}{|c||c|c||c|}
\hline
$X_{*}=r/\beta$ & \multicolumn{2}{|c||}{GBS Policy} & CBS Policy \\ \cline{2-4}
& $\gamma$ & cost & cost \\ \hline
2 & 1.6 & 1.00 & 1.08 \\ \hline
10 & 2.2 & 2.01 & 2.50  \\ \hline
20 & 2.4 & 2.66 & 3.55  \\ \hline
100 & 3.4 & 4.95 & 7.97 \\ \hline
200 & 4.8 & 6.41 & 11.3 \\ \hline
400 & 5.6 & 8.22 & 16.0 \\ \hline
600 & 5.8 & 9.53 & 19.5 \\ \hline
800 & 6.8 & 10.5 & 22.6  \\ \hline
1000 & 6.8 & 11.4 & 25.2 \\ \hline
1200 & 7.8 & 12.2 & 27.6 \\ \hline
1400 & 7.8 & 12.9 & 29.9 \\ \hline
1600 & 8.6 & 13.5 & 31.9 \\ \hline
1800 & 8.6 & 14.1 & 33.8 \\ \hline
2000 & 8.6 & 14.6 & 35.7 \\ \hline
\end{tabular}
\caption{A Comparison between GBS and CBS Policies: lead time exponentially distributed with mean $\beta=1/2$; $h=\theta=1$}
\label{tbl:report1}
\end{center}
\end{table}

\begin{figure}[ht]
\begin{center}
\includegraphics[scale=0.7]{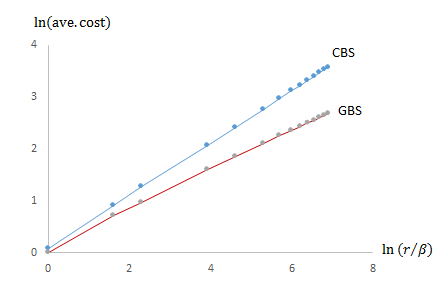}
\caption{Changes of Costs with Mean Lead Time Demand on Log Scale}
\label{fig:logchange}
\end{center}
\end{figure}
Figure \ref{fig:logchange} highlights the asymptotic behavior. The horizontal axis is $\ln (r/\beta)$, the log value of the mean lead time demand, and the vertical axis is $\ln \cc$, the corresponding average inventory cost on the log scale.  It is well-known that with Poission arrivals, the inventory cost under CBS policy scale as $\Theta(\sqrt{r})$. This is consistent with the top line in the figure, which, by simple regression, fits almost perfectly ($R^2>0.999$) with the linear equation
\[
\ln \cc=0.1+0.5 \ln (r/\beta). 
\]
On the other hand, Corollary~\ref{cor1} shows that the cost under GBS policy is $o(\sqrt{r})$. This is confirmed by the bottom line in the figure, which fits the linear equation ($R^2>0.998$) :
\[
\ln \cc =0.08+0.38 \ln (r/\beta).
\]
The difference of the slopes of the two linear functions suggests that as the demand rate $r$ increases, the ratio of the costs under GBS and CBS policies decays approximately as $\Theta(r^{-0.12})$.

Corollary~\ref{cor1} shows -- and the above simulation results confirm -- that GBS policy significantly outperforms CBS policy when $r$ is large.
Next we investigate if GBS policy is significantly better when $r$ is {\em not} large.
 As is shown by the few cases reported in the top rows of Table \ref{tbl:report1}, with $\gamma>1$, GBS policy results in a distinctly lower inventory cost than CBS policy when the arrival rate is small or modest ($r=1, 5, 10$). Table \ref{tbl:dmn10} shows that this difference remains to be significant in other cases, with asymmetric costs.  For the case where $r=10$, we vary cost parameters $h$ and $\theta$ from one extreme ($h/\theta=9$) to another ($h/\theta=1/9$). In the table, we show the base stock levels of CBS policy, parameter values of GBS policy, $\gamma$ and $X_{**}$, and compare the average inventory cost between the two policies.

\begin{table}[ht]
\begin{center}
\begin{tabular}{|c|c|c|c|c|c|c|c|c|c|}
\hline
\multirow{2}{*}& $h$ (holding cost)& 9 & 6 & 3 & 1 & 1 & 1 & 1  \\ \cline{2-9}
 & $\theta$ (backlog cost) & 1 & 1 & 1 & 1 & 3 & 6 & 9 \\ \hline \hline
\multirow{2}{*}{CBS} & base stock level   & 14  & 15   &  17  &   20 &  23   &   25 & 26  \\ \cline{2-9}
&  cost & 7.44 &  6.74 & 5.52 & 3.54  & 5.79 & 7.34 & 8.16 \\ \hline
\multirow{3}{*}{GBS} &  $\gamma$ & 2.0 & 2.0 & 2.0 & 2.6 & 2.6 & 2.8 & 3.0 \\  \cline{2-9}
& $X_{**}$ &  11.9    & 13.3  & 15.8 & 20 & 24.9 & 27.9   & 29.9 \\  \cline{2-9}
& cost        &  5.62 & 5.22 & 4.17 & 2.66 & 4.18 & 5.14 & 5.58 \\ \hline
\end{tabular}
\caption{Comparison of Average Costs between GBS and CBS, with $X_*= r/\beta=20$}
\label{tbl:dmn10}
\end{center}
\end{table}

The base stock level of CBS policy decreases with the inventory holding cost $h$ and increases with the backlog cost $\theta$, which is expected. Both $\gamma$ and $X_{**}$ of  GBS policy follow the same trend, reflecting the fact that ordering more units becomes more costly when the holding cost is higher, and more economical when the backlog cost is higher. When the holding cost is extremely high, $h=9$, both the base stock level of CBS policy and the base  $X_{**}$ of GBS policy fall below the mean lead time demand $X_*$. Even so, results in Table \ref{tbl:dmn10} show that setting $\gamma$ above the unity still generates substantial cost savings. The saving becomes increasingly significant as we reduce the holding cost $h$ and increase the backlog cost $\theta$, which is not a surprise. Recall from the proof of Theorem~\ref{thm-main} that in the {\em artificial} process, the inventory cost can be reduced to any level by increasing $\gamma$, because (in the artificial process) in-transit supply can be disposed at will.  The latter assumption does not apply to the actual system under GBS policy, and as a result the inventory in the actual system is (stochastically) larger than in the artificial one.
However, the negative impact of this difference diminishes when the cost of having backlogs dominates the cost of keeping inventory. So as $\theta$ increases and/or $h$ decreases, GBS policy can use a larger $\gamma$, and its performance advantage over CBS policy becomes more significant.

In practice, the lead time is typically non-exponential. All orders are likely to undergo some common delay component such as a constant transportation time. In the presence of a deterministic delay component, there is a limit on how fast outstanding backlogs can be cleared regardless how many units are ordered  in response to the occurrence of these backlogs. Therefore any efficient policy needs to build an inventory position to cover demand variations within the constant delay time. Suppose all orders are subject to a minimum constant delay $d>0$.  Consider the process state at time $t$. The inventory arrivals in interval $[t,t+d]$ cannot be controlled -- they are random in general but depend only on the state of the inventory pipeline at time $t$. Then the demand arrivals in $[t,t+d$] are independent of the inventory arrivals, and they have asymptotically normal distribution with variance  $\Theta(r)$. Therefore the steady-state inventory is at least $\Theta(\sqrt{r})$. It follows that if $d$ is a non-trivial fraction of the lead time, then {\em no policy}, including  any GBS policy, can have the 
the average inventory cost scaling as $o(\sqrt{r})$. Nevertheless, this does not mean that GBS policy will lose its edge completely. Our results below shows that as the deterministic delay component becomes larger, the advantage of GBS policy does deteriorate, but gradually. 

We consider cases in which the lead time is composed of a constant lag time $d$ and an exponentially-distributed random delay. The mean lead time is kept at the same value of $2$. As in the base case, we let $h=\theta=1$. We let the length of the constant lag time to be $0.2$,  $10\%$ of the mean lead time and vary the demand arrival rate. Table \ref{tbl:report2} shows a comparison between GBS and CBS policies in these cases. There is little change of the inventory cost under CBS policy, which is implied by Palm's Theorem (\cite{Feeney1966}). In comparison with Table \ref{tbl:report1}, average costs under GBS policy are larger and values of $\gamma$ are smaller. Nevertheless, GBS policy remains to be noticeably better than CBS policy: in all cases, letting $\gamma>1$ generates nontrivial amounts of cost savings, and the gap between the costs of GBS and CBS policies widens as the demand rate $r$ increases. 
\begin{table}[ht]
\begin{center}
\begin{tabular}{|c||c|c||c|}
\hline
ave. lead time demand & \multicolumn{2}{|c||}{GBS Policy} & CBS Policy \\ \cline{2-4}
$X_*=r/\beta$ & $\gamma$ & cost & cost \\ \hline
2 & 1.4 & 1.02 & 1.08 \\ \hline
10 & 1.8 & 2.12 & 2.51  \\ \hline
20 & 2.2 & 2.84 & 3.56  \\ \hline
100 & 2.8 & 5.64 & 7.93 \\ \hline
200 & 3.2 & 7.53 & 11.3 \\ \hline
400 & 3.8 & 10.1 & 15.9 \\ \hline
600 & 4.4 & 12.0 & 19.5 \\ \hline
800 & 4.4 & 13.6 & 22.6 \\ \hline
1000 & 4.8 & 15.0 & 25.2 \\ \hline
1200 & 5.2 & 16.2 & 27.6 \\ \hline
1400 & 5.4 & 17.4 & 30.0 \\ \hline
1600 & 5 & 18.4 & 31.9 \\ \hline
1800 & 5.6 & 19.3 & 33.8 \\ \hline
2000 & 5.2 & 20.3 & 35.8 \\ \hline
\end{tabular}
\caption{Comparison between GBS and CBS Policies: mean lead time=2, deterministic component $d=0.2$}
\label{tbl:report2}
\end{center}
\end{table}

CBS policy is optimal when the lead time is completely deterministic. Nevertheless, our simulations show that when the constant lag time grows as a part of the (fixed) mean lead time, the advantage of GBS policy, while weakened, does not disappear instantly. In Figure \ref{fig:ConplusRan}, we show two cases with moderate and high mean lead time demands ($r/\beta=20$ and $r/\beta=1000$, respectively). In each case, we vary the constant time lag $d$ from $0$ to the mean lead time $2$. While the average inventory cost under CBS policy stays mostly the same (also expected from Palm's Theorem), the cost under GBS policy rises in a gradual manner. Cost savings by following GBS policy is still quite noticeable when the $d>1$, i.e., when the deterministic component is more than $50\%$ of the mean lead time. This outcome is supported by Figure \ref{fig:Conplusgamma}, which shows that  even when $d$ becomes a very large fraction of the mean lead time the best choice of $\gamma$ is above unity, which means that GBS policy strictly outperforms CBS policy. 

\begin{figure}[ht]
\begin{center}
\includegraphics[scale=0.6]{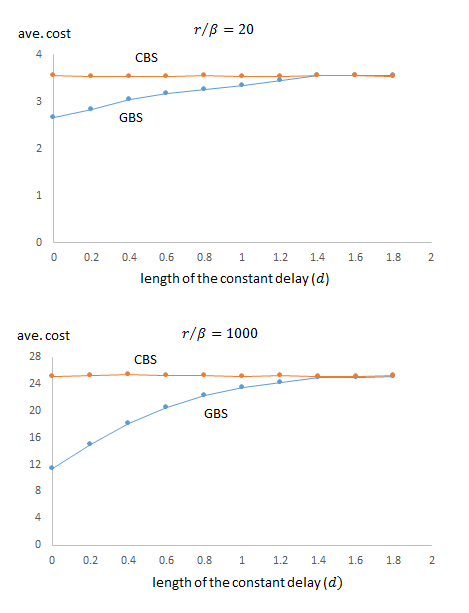}
\caption{Changes of inventory cost under the GBS and CBS policies  with $d$, the deterministic component of the lead time}
\label{fig:ConplusRan}
\end{center}
\end{figure}

\begin{figure}
\begin{center}
\includegraphics[scale=0.6]{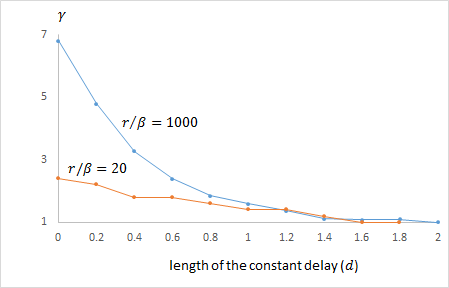}
\caption{Change of GBS policy parameter $\gamma$ with $d$: the deterministic component of the lead time}
\label{fig:Conplusgamma}
\end{center}
\end{figure}

Our simulations also show that performance advantage of GBS policy extends to systems with other non-exponentially distributed lead times. We change the setup of the base case by replacing the exponentially distributed lead time (with mean $2$) by the uniformly distributed one in $[0,4]$ (with the same mean $2$). A comparison between GBS and CBS policies are shown in Table \ref{tbl:uniform}. Just like the base case, the inventory cost is significantly lower under GBS policy, achieved by letting $\gamma>1$. The gap between the two policies also keeps growing with $r$.  
Comparing results in Tables \ref{tbl:report1} and \ref{tbl:uniform},  the difference between the two lead time distributions does not affect the performance of CBS policy, which is expected. We notice, however, that the advantage of GBS policy over CBS policy diminishes when the lead time is uniformly-distributed (compared with the exponential lead time).  

The uniform distribution has increasing hazard rate. We also run simulations with lead time having a decreasing hazard rate, namely Pareto distribution, 
\begin{equation}
\label{eq:pdist}
\pr(L > x)=1-F(x)=\frac{1}{(1+\tau x)^q},
\end{equation}
with parameters $q=3$ and $\tau=0.25$, so that the mean lead time remains to be $2$.
The comparison between GBS and CBS policies in this case is shown in Table \ref{tbl:pareto1}. 
Comparing Tables \ref{tbl:report1} and \ref{tbl:pareto1},
we observe something quite surprising: replacing the exponential lead time distribution by the above Pareto distribution {\em increases} the advantage
of GBS policy over CBS policy. Investigating this intriguing phenomenon is a subject of further research.

\begin{table}[ht]
\begin{center}
\begin{tabular}{|c|c|c||c|}
\hline
ave. lead time demand & \multicolumn{2}{|c||}{GBS} & CBS \\ \hline
 $X_*=r/\beta$& opt. $\gamma$ & cost & cost \\ \hline
2& 1.4 & 1.06 & 1.09\\ \hline
10 & 1.6 & 2.29 & 2.52 \\ \hline
20 & 1.8 & 3.13 & 3.54 \\ \hline
100 & 2.6 & 6.45  & 7.96 \\ \hline
200 & 3.2 & 8.80 & 11.3 \\ \hline
400 & 3.8 & 12.1 & 16.0  \\ \hline
600 & 4.0 & 14.4 &19.5 \\ \hline
800 & 4.2 & 16.5 &22.6 \\ \hline
1000 & 4.4 & 18.2 &25.2 \\ \hline
1200 &  5.0 & 19.7 & 27.7 \\ \hline
1400 & 5.2 & 21.2 & 30.0 \\ \hline
1600 & 5.4 & 22.5 & 31.9 \\ \hline
1800 & 5.4 & 23.8 & 33.9 \\ \hline
2000 & 5.4 & 24.8 & 35.9 \\ \hline
\end{tabular}
\end{center}
\caption{Comparison between GBS and CBS Policies when the lead time is uniformly distributed in $[0,4]$}
\label{tbl:uniform}
\end{table}

\begin{table}[ht]
\begin{center}
\begin{tabular}{|c|c|c||c|}
\hline
ave. lead time demand & \multicolumn{2}{|c||}{GBS} & CBS \\ \hline
 $X_*=r/\beta$ & $\gamma$ & cost & cost \\ \hline
2& 1.6 & 0.96 & 1.08\\ \hline
10 & 1.8 & 1.93 & 2.51 \\ \hline
20 & 2.4 & 2.47 & 3.57 \\ \hline
100 & 3.8 & 4.52  & 8.00 \\ \hline
200 & 4.6 & 5.80 & 11.2 \\ \hline
400 & 5.6 & 7.42 & 15.9  \\ \hline
600 & 5.8 & 8.55 &19.6 \\ \hline
800 & 6.4 & 9.47 &22.6 \\ \hline
1000 & 6.8 & 10.2 &25.1 \\ \hline
1200 & 7.6   &  10.9 & 27.5 \\ \hline
1400 &  8.0 & 11.5 & 29.7 \\ \hline
1600 &  8.0 & 12.0  & 31.6 \\ \hline
1800 &  8.2 &  12.5 & 33.9  \\ \hline
2000 & 8.4 &13.0 & 35.5 \\ \hline
\end{tabular}
\end{center}
\caption{Comparison between GBS and CBS Policies when the lead time has Pareto distribution with $q=3$ and $\tau=0.25$}
\label{tbl:pareto1}
\end{table}

\section{Numerical evaluation of the GBS optimality gap}
\label{sec:costcomp}

We have shown that GBS policy can lead to very significant cost savings in comparison with CBS policy.  We are yet to see the extent to which these savings can help to close the optimality gap. To answer this question, we need to compare the cost of GBS policy with the minimum cost achieved under an optimal policy.  In systems with exponentially-distributed lead times, the latter cost is the optimal objective value of a two-dimensional MDP model.

Details about setting up and solving the MDP problem are given in the E-Companion.  Because of the growth of the number of states that need to be considered, solving the MDP problem is intractable once the  demand rate becomes sufficiently large (or we would have found the optimal policy in general).  Below we select a subset of cases in Table \ref{tbl:report1} with the demand rate not exceeding $500$ (i.e., $r/\beta \le 1000$) for our comparison. Table \ref{tbl:optimality} shows the average inventory costs of CBS and GBS policies, to be compared with the minimum cost  calculated by solving 
(\ref{eq:LPobj})-(\ref{eq:const2}).

Differences between the CBS and GBS costs are far greater than differences between the GBS cost and the minimum cost. This means that the optimality gap of CBS policy is significantly smaller than that of the GBS policy. While the gap of the former can be higher than $100\%$ and increasing, that of the latter never exceeds $15\%$. Furthermore, as $r$ increases, the gap under CBS policy grows without bound, reaching above $100\%$ when $r$ exceeds $200$. In contrast, under GBS policy, the growth of the optimality gap is small, and moreover, the gap appears to remain bounded as $r\to\infty$. The outcome strengthens our findings in previous sections.  The GBS cost in these cases is not only a decreasing fraction of the CBS cost, but also appears to be with a constant factor from the minimum cost.

\begin{table}[ht]
	\begin{center}
		\begin{tabular}{|c|c|c|c|c|c|c|}
			\hline
			$r/\beta$ & $\gamma$ &  MDP & \multicolumn{2}{c|}{CBS}& \multicolumn{2}{c|}{GBS} \\ \hline
				& & cost & cost & opt. gap & cost & opt. gap  \\[5pt] 
			
				& & (minimum) & & $\left(\frac{\mbox{CBS}-\mbox{MDP}}{\mbox{MDP}}\right)$ & & $\left(\frac{\mbox{GBS}-\mbox{MDP}}{\mbox{MDP}}\right)$  \\[5pt] \hline
			2 & 1.6 & 0.95 & 1.08 &  $13.6\%$ &  1.00 & $5.3\%$  \\ \hline
			10 & 2.2 &  1.87 & 2.50 & $33.7\%$ &  2.01 & $7.5\%$ \\ \hline
			20 & 2.4 &  2.45 & 3.55 & $44.9\%$ &  2.66 & $8.6\%$ \\ \hline
			100 & 3.4 & 4.44 & 7.97 & $79.5\%$ & 4.95 & $11.5\%$  \\ \hline
			200 & 4.8 & 5.70 & 11.3 & $98.2\%$ & 6.41 & $12.5\%$ \\ \hline 
			400 & 5.6 & 7.28 & 16.0 & $119.8\%$ & 8.22 & $13.0\%$ \\ \hline
			600 & 5.8 &  8.40 & 19.5 & $132.1\%$ & 9.53 & $13.5\%$ \\ \hline
			800 & 6.8 &  9.28 & 22.6 & $143.5\%$ & 10.5 & $13.1\%$ \\ \hline
			1000& 6.8 & 10.03 & 25.2 & $151.2\%$ & 11.4 & $13.6\%$ \\ \hline
		\end{tabular}
	\end{center}
	\caption{Comparison of average cost: CBS, GBS, and MDP (optimal)}
	\label{tbl:optimality}
\end{table}

To get more details, consider the case where $r/\beta=20$. The solution of (\ref{eq:LPobj})-(\ref{eq:const2}) suggests that the optimal policy, similar to GBS and CBS policies, has the form of following an inventory level-dependent target. The target is set for the in-transit inventory level and varies with the current net inventory level. When the current number of units in transit ($z$) is below the target,  a new order is placed to eliminate the difference. No action is taken when the in-transit inventory level is at or above the target. 

Figure \ref{fig:targetcomp} shows values of three targets: optimal, GBS, and CBS.  The optimal target is a nonlinear function of the net inventory level ($y$). The target is zero when $y \ge 2$, and  rises rapidly as $y$ decreases and becomes negative (i.e., the system switches from having excess inventory to backlogging demands).  As $y$ continues to decrease, the target keeps increasing, but at a slower rate. 

As a comparison, the figure also shows target values under GBS policy, which change at a constant rate ($\gamma=2.4$) with the net inventory level.  Observe from the figure that the GBS target can be viewed as a first-order approximation to the optimal target, and approximation error is small when $y \in [-9,-1]$, This is a critical range where the system has lower levels of backlog.  Under the optimal policy, the steep increase of the target level dictates to have many units in-transit, so the backlog can be reduced rapidly instead of accumulating. GBS policy follows the same strategy by prescribing similar target values. Nevertheless, subject to a constant rate of change with $y$, the GBS target inevitably ``overshoots'' the optimal target in cases where $y<9$, when the system has large backlogs, or $y \in [2,8]$, when the system has excess inventories.

The loss of the optimality is a necessary price for having a feasible and simple-to-implement policy in general systems. In the case we just discussed, the optimal target can be determined by solving LP (\ref{eq:LPobj})-(\ref{eq:const2}) with $6,562$ variables and $13,003$ constraints. However, the computation quickly becomes impossible as the demand rate increases (when $r/\beta=1000$, the LP has $626,752$ variables and $1,252,000$ constraints).  In contrast, target values can be trivially computed if they change with the net inventory levels at a constant rate, which is the case with GBS and CBS policies. Figure \ref{fig:targetcomp} shows that with the rate of change preset at $\gamma=1$, the CBS target bears little relevance to the optimal one. With a better choice of rate $\gamma$, the discrepancy is largely corrected under GBS policy, resulting in, as we see from Table \ref{tbl:optimality}, the removal of a majority of the excess inventory cost of following CBS policy. 
\begin{figure}[ht]
	\begin{center}
	\includegraphics[scale=0.7]{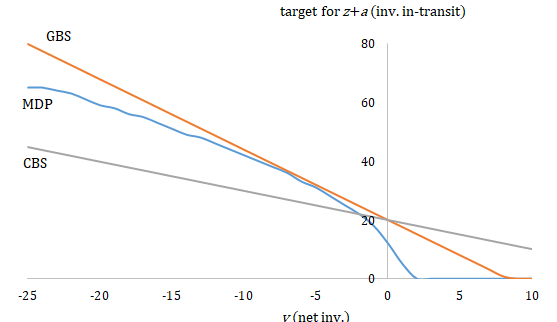}
	\caption{Comparison of in-transit inventory target}
	\label{fig:targetcomp}
	\end{center}
\end{figure}

\section{Potential extensions of the main theoretical results, Theorem~\ref{thm-main} and Corollary~\ref{cor1}}
\label{sec:extend}

Potential generalizations of our main theoretical results, discussed in this section, are a subject of future work.

\subsection{Fractional parameter $\gamma$}

We conjecture that Theorem~\ref{thm-main} holds as is for any real parameter $\gamma>0$. 
The point in our proof of Theorem~\ref{thm-main}, where the integrality of $\gamma$ is used in essential way,
is the analysis of the artificial process $\hat Y$, specifically the proof of Lemma~\ref{lem-stylized}. 
The integrality of $\gamma$ ensures that the possible values of $\hat Z = \lceil \hat T \rceil$ are ``equally spaced'' with the (integer) step $\gamma$. This leads to the simple linear dependence of the birth-death process up-transition rates on the state (see the first line in \eqn{eq-up-trans}); our proof of Lemma~\ref{lem-stylized} relies on that. If $\gamma$ is fractional, the possible values of $\hat Z = \lceil \hat T \rceil$,
although still integer, are not ``equally spaced,'' and we do not have the linear dependence of the up-transition rates as in the first line in \eqn{eq-up-trans}. This could potentially be overcome via a more involved and cumbersome proof Lemma~\ref{lem-stylized}. Or perhaps Lemma~\ref{lem-stylized} could be proved using different tools, as opposed to fairly explicit analysis of a birth-death process stationary distribution. 
Exploring this generalization may be a subject of future work.

\subsection{Batch orders}

If supply units are ordered in fixed batches of size $C$, the GBS policy is adjusted in the natural way, to keep $Z$ as close to $T$ as possible. We conjecture that for such generalized GBS policy Theorem~\ref{thm-main} should hold as is. Again, this generalization 
would require, primarily, a more involved (or perhaps completely different) proof of Lemma~\ref{lem-stylized} for
the artificial process $\hat Y$. Further, we conjecture that Theorem~\ref{thm-main} holds for the model allowing fractional 
$\gamma$ and fixed batch size simultaneously.

\subsection{Non-exponential lead times $L$}

First of all, Corollary~\ref{cor1} cannot possibly hold for an arbitrarily distributed lead time $L$, under any policy. To illustrate, suppose $L$ is lower bounded, $L \ge d >0$. For the system with parameter $r$, let $Y^r(t)$ be the net inventory level, $A^r(t)$ be the number of items in the pipeline that will arrive in the interval $(t,t+d]$, and $\Pi_{rd}$ be a Poisson random variable with mean $rd$, independent of $(Y^r(t),A^r(t))$. Then, clearly, under any policy, 
$$
Y^r(t+d) = Y^r(t) + A^r(t) - \Pi_{rd}.
$$
Since $\Pi_{rd}/\sqrt{r}$ converges in distribution to $\mathcal N(0,d)$ as $r\to\infty$, it easily implies that, under any policy,
$$
\liminf_{r\to\infty} \E |Y^r(\infty)|/\sqrt{r} \ge \sqrt{d}.
$$
In other words, {\em asymptotically}, the expected absolute inventory level cannot possibly be better than under CBS policy with constant lead time $d$. (It should be noted, however, that {\em for a fixed finite $r$}, as our simulations show, when the lower bound $d$ is small compared to $\E L$, a very substantial cost reduction compared to CBS is achievable.)

The above argument suggests (although does not directly prove) that the scaling $ \E |Y^r(\infty)| = o(\sqrt{r})$ cannot be achieved if the distribution
of $L$ has zero hazard rate at $0$. If this conjecture is correct, $ \E |Y^r(\infty)| = o(\sqrt{r})$  cannot be achieved, for example, for the lead times having Erlang-k ($k >1)$ distribution.
Whether or not this scaling is achievable under lead time distributions other than exponential, but with non-zero hazard rate at $0$, is a subject of future work.

\section{Conclusion}
\label{sec:conclusion}

Randomness in replenishment lead times, especially when it causes orders to cross in time, makes it difficult to analyze and optimize inventory systems. Our work shows that the challenge is worth to have. Randomness of lead times brings about opportunities for drastic performance improvement in inventory management. Such outcome can be achieved under our GBS policy, which responds
to changes in the inventory level, by aggressively ``pushing'' the inventory-in-transit levels in the opposite direction.
In comparison with the commonly-used CBS policy, GBS policy reduces the inventory cost by a sizable percentage, which keeps increasing with the demand rate.  For many corporations that hold hundreds of millions, if not billions of dollars' worth of inventory, even a small fraction of such reduction can translate into multi-million dollar annual savings.

The advantage of our approach is gained by departing from a common principle underlying many conventional approaches, which determine order quantities based only on the inventory position. We show that in the presence of random lead times and order crossovers, making order decisions based on {\em both the inventory position and inventory level} can make the system significantly more cost-efficient. This new feature of our policy leads to a much more complex inventory process than the ones under CBS policy.
In the case of exponentially distributed lead times, we prove that, as the demand rate increases, the average inventory cost under GBS policy
vanishes compared with that under CBS policy. Our simulation results also show that the superiority of our policy is persistent, or even more pronounced, in many other cases. 

We are not aware of any work in the literature that take a similar approach to address systems with random lead times \& crossovers, and thus believe this work represents a major and very promising departure from the existing literature.  While the path has been opened, the exploration is just beginning, as there is a host of interesting questions that need to be answered. For instance,
\begin{itemize}
\item
A systematic method to determine good values of the key parameter $\gamma$ needs to be developed.
\item
Our simulations also show that when the lead time is exponentially-distributed and cost rates $h$ and $\theta$ are equal, the average inventory cost under GBS policy appears to grows as $r^{\upsilon}$, where $\upsilon \approx 0.38$.  (From our asymptotic analysis 
we only know that the cost must grow slower than $r^{0.5}$, i.e. slower than under CBS policy.)  
Finding a formal basis for this particular growth rate will certainly deepen the understanding of our approach. Comparisons with the optimal policy show that the optimality gap of GBS policy does not increase substantially with $r$. In fact, they suggest that the GBS cost remains within a constant factor of the optimal cost -- this is another question for further study.
\item
We have observed from the MDP solutions that in some cases with exponential lead times, the optimal policy has the form of the inventory level-dependent target, and the GBS target provides a close first-order approximation to the optimal one. It remains to be seen whether this result is generally applicable, and if so, how it can be used to optimize parameters of GBS policy and provide further quantification of its performance. 
\item
In our simulations, the cost advantage of GBS policy over CBS policy becomes weaker under a uniform lead time distribution and stronger under a Pareto distribution. This observation about Pareto distribution is very intriguing and deserves further analysis, as does the general question of the dependence of GBS and other policies' performance on the lead time distribution. 
\item 
 In our asymptotic analysis, we keep the lead time intact and scale up the demand rate. In continuous-review systems with exponential lead times, the length of the mean lead time is relative to the length of the mean inter-arrival time of demand. Thus we can develop an equivalent asymptotic regime by keeping the the demand rate constant and scaling up the mean lead time. Correspondingly, in the periodic-review systems, we can introduce two analogous asymptotic regimes: high demand rate and long lead time. The two regimes are not equivalent since both lengths of the mean inter-arrival time and the mean lead time are relative to the fixed length of a review period (so shortening the former is not equivalent to extending the latter). We conjecture that in the high demand rate asymptotic regime, it is not possible for the average inventory cost to scale as $o(\sqrt{r})$. On the other hand,  in the long lead time asymptotic regime, there exist lead time distributions under which the average cost grows at a slower rate than the square root of the mean lead time, and it will be interesting to identify and analyze these systems. 

\item 
For systems with fixed ordering cost and i.i.d. lead times, the average cost scales as $\Theta(r^{2/3})$ under the optimal  $(R,q)$ policy (\cite{Ang2017}). 
Applying the lesson learned from this work, we conjecture that there can be potentially large cost savings by deviating from $(R,q)$-type  ordering policies, which, like CBS policy, are based only on the inventory position. Whether such savings will be significant enough to make the average cost scales as $o(r^{2/3})$, and if so, what type of policies can achieve this, are interesting questions for future research. 

\end{itemize}

As mentioned earlier, a different type control scheme, oriented towards service systems, is analyzed in \cite{PaSt2014_agents_inv}.
The latter policy is very attractive in that 1) it does {\em not} require a priori knowledge of the demand rate $r$ and mean lead time
$1/\beta$, and 2) automatically adapts to changes in $r$ and $1/\beta$ over time. However, the rigorous analysis in \cite{PaSt2014_agents_inv} applies only to the case when the in-transit inventory items can be instantly removed without penalty.
Such assumption is sometimes valid for service systems, but almost never valid for inventory systems, in which orders that have been placed cannot be freely canceled.  It is an interesting challenge to see if a modification of the adaptive policy in \cite{PaSt2014_agents_inv}, {\em which does not remove in-transit inventory}, can be proved to be as efficient as the GBS policy
of this paper.


\ECSwitch
\ECHead{E-Companion}

\section{Additional Details about the Simulation (Section \ref{sec:numerical})}

By our design,  each simulation starts from an empty system and runs for $800$ units of time. To avoid possible bias caused by the  initial state, the first $200$ units of time is for warm-up and output values are collected from the period $[200,800]$. We run each case on $100$ randomly generated sample paths and report their average values as results of the simulation.  

To provide more insights on the policy parameters, we show additional simulation results on the case with exponential lead times and $r/\beta=20$.  Distributions below are empirical values estimated by simulations. 

\smallskip
\noindent{\bf Impact of parameter $\gamma$}

Figure \ref{fig:artificial} shows the distribution of the net inventory level under the artificial process with $X_{**}=r/\beta$ (i.e., $x_*=0$) for different values of $\gamma$. As $\gamma$ increases, the distribution becomes increasingly concentrated around 0, so the expected inventory cost decreases with $\gamma$.
\begin{figure}[ht]
	\begin{center}
	\includegraphics[scale=0.7]{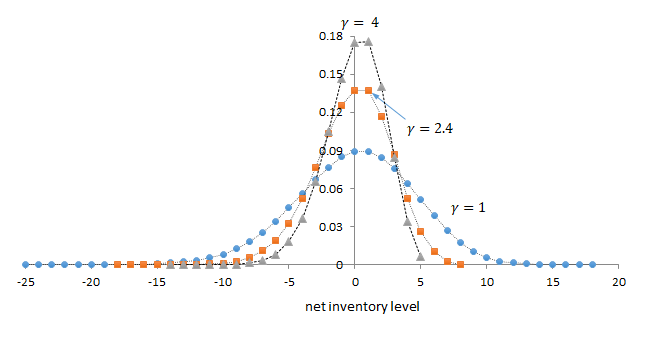}
	\caption{Simulation results: distributions of net inventory levels under the artificial process for different values of $\gamma$}
		\label{fig:artificial}
	\end{center}
\end{figure}

However, when $\gamma >1$, the actual process differs from the artificial process because the arrival of an ordered item reduces the in-transit inventory by only one unit, instead of by $\gamma$ units as is assumed for the artificial process. Figure \ref{fig:difference} shows differences of the net inventory distribution between the two processes for $\gamma=2.4$ and $\gamma=4$.  The difference is larger under a larger $\gamma$. 

\begin{figure}[ht]
	\begin{center}
	\includegraphics[scale=0.5]{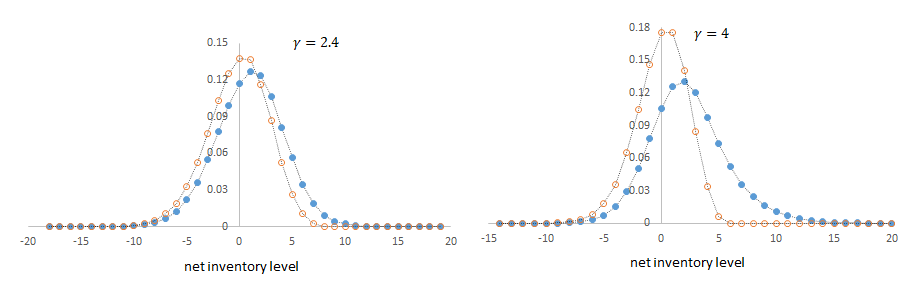}
	\caption{Simulation results: differences in the distribution of net inventory levels between the artificial and actual processes}
		\label{fig:difference}
	\end{center}
\end{figure}

Therefore, the choice of $\gamma$ needs to balance the two competing considerations of minimizing the inventory cost under the artificial process (by increasing $\gamma$) and keeping the difference between the artificial and actual processes small (no difference if $\gamma= 1$). In the simulation, the parameter is determined by searching its values in a possible range by using a fixed increment of 0.2.

\smallskip
\noindent{\bf Impact of parameter $x_*$}

As discussed in the paper, parameter $x_*$ serves to recenter the stationary distribution of the inventory level, when it is beneficial. 
By Theorem \ref{thm-main}, the distribution of the inventory level under GBS policy is approximately Normal with mean $x^*$.
Therefore, when $\theta=h$, i.e. per-unit costs are the same ``on both sides,'' the inventory level should be centered at $0$: $x^*=0$. 
(That is what it is for the simulations in Table 1.)
On the other hand, when $\theta \ne h$, the average inventory cost can be reduced by ``shifting'' the distribution of the net inventory level to the ``side'' with the smaller per-unit cost. These considerations are realized by using (\ref{eq:basexx}) to determine $x_*$. 

Based on simulation results, we demonstrate the impact of parameter $x_*$. Figure \ref{fig:thetalow} corresponds to a case in Table \ref{tbl:dmn10} where $\theta=1$ and $h=9$. By (\ref{eq:basexx}), $x_*=-8.1$. As the left diagram shows, in comparison with the case where $x_*=0$, the distribution of the inventory level is biased towards the negative values. The resulting cost impact is shown by the digram on the right, which compares the product of the probability associated with each state of the net inventory level and the cost of being in that state (the sum of these values is the average inventory cost).  As is expected, the shift induced by letting $x=-8.1$ leads to modestly higher backlog costs (when the inventory level is negative) and substantially lower inventory holding cost (when the inventory level is positive).
	\begin{figure}[ht]
		\begin{center}
			\includegraphics[scale=0.5]{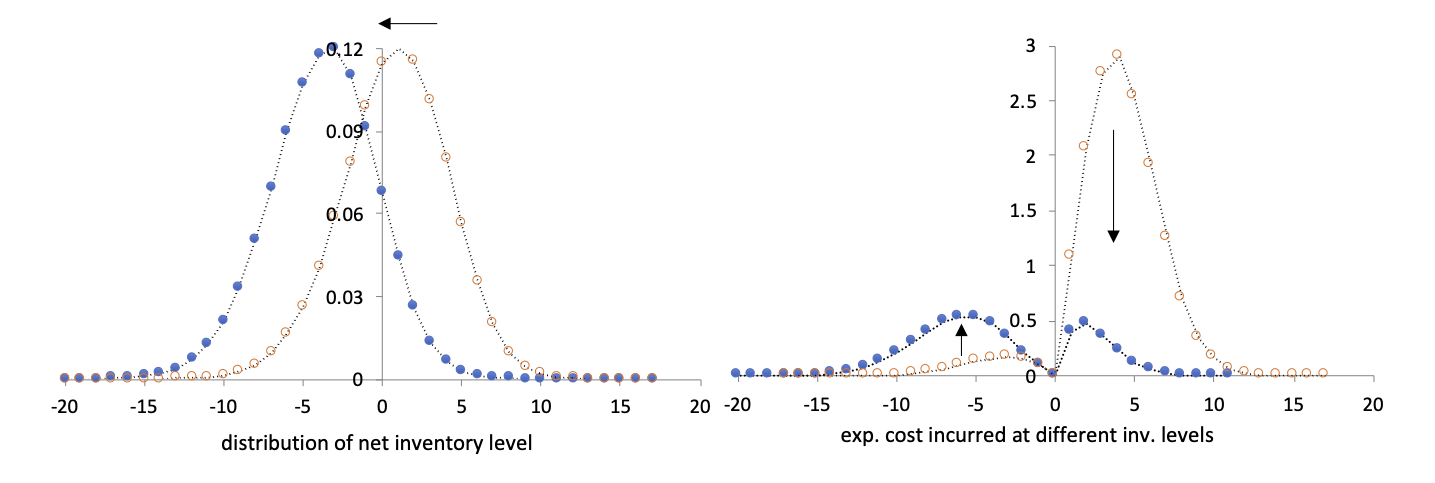}
			\caption{Simulation results: differences between $x_*=0$ (unfilled circles) and $x_*=-8.1$ (solid circles) when $\theta=1$ and $h=9$. }
					\label{fig:thetalow}
		\end{center}
	\end{figure}

Figure \ref{fig:hlow} shows the similar effect by another case in Table \ref{tbl:dmn10} where $\theta=9$ and $h=1$.  Applying (\ref{eq:basexx}) to these cost parameters, $x_*=9.9$, which, in comparison with the case where $x_*=0$, shifts the distribution to the positive net inventory levels. This gives rise to increases in the inventory holding cost, which is compensated by larger savings of the backlog cost. 
\begin{figure}[ht]
	\begin{center}
		\includegraphics[scale=0.5]{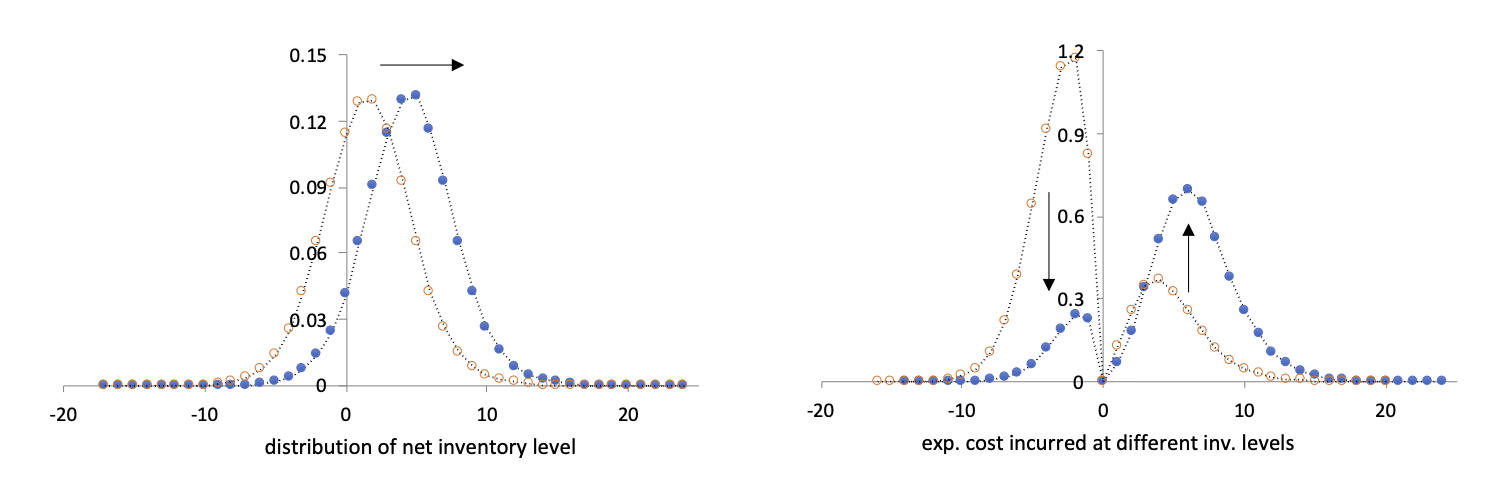}
		\caption{Simulation results: differences between $x_*=0$ (unfilled circles) and $x_*=9.9$ (solid circles) when $\theta=9$ and $h=1$.}
			\label{fig:hlow}
	\end{center}
\end{figure}

\section{Setup of the MDP Problem (Section \ref{sec:costcomp})}
The state space of the MDP is given by $(y,z)$, where $y$ is the net inventory level and $z$ ($z \ge 0$) is the number of units in-transit.  The state-dependent control is exercised by choosing $a$ ($a \ge 0$), the number of units to order, and the action is taken when the system enters a new state. Given $a$, the mean sojourn time in state $(y,z)$ is $1/(r+\beta(z+a))$. Upon departing from state $(y,z)$, the system enters state $(y-1,z)$ with probability $r/(r+\beta(z+a))$
and state $(y+1,z+a-1)$ with probability $\beta(z+a)/(r+\beta (z+a))$.
The direct cost of being in state $(y,z)$ is 
\[
c(y)=h \max(y,0)-\theta \min(y,0)
\]
per unit of time.
The objective is to minimize the average inventory cost defined in  (\ref{eq:costobj}) with $\cc(t)=c(y_t)$,
where $(y_t,z_t)$ is the system state at time $t$. 

We introduce a finite-state approximation to the MDP model by truncating its state space.
Specifically, we assume that the control keeps the inventory position within a fixed finite range, 
\[
I_m \le y+z+a \le I_M
\]
where $I_m \ge 0$.  We also assume that the demand stops arriving when the backlog reaches some level $\underline{y} <0$,
where $|\underline{y}|$ is sufficiently large. This is implemented by defining the demand rate $r_y$ as a function of $y$; namely,  $r_y=0$ if $y = \underline{y}$ and $r_y=r$ otherwise. This removes all states $(y,\cdot)$ with $y<\underline{y}$ from the model. 

The above assumptions are needed for casting and solving the MDP problem as a Linear Program (LP) with a finite and tractable size. It is very intuitive to expect that, when parameters $I_m, I_M, |\underline{y}|$  are large, the optimal cost of the truncated model is very close to that of the actual model, because the actual process under an optimal control spends very little time in the states removed by the truncation.

Let $g$ be the minimum long-run average expected cost. Let $\nu(y,z)$ be the bias variable, which is (up to a shift by a constant) the difference in the minimum long-run total expected cost between the case where the system starts in state $(y,z)$  and the case where the per-time-unit cost is constant, equal to $g$. Then the LP formulation of the MDP is 

\begin{equation}
\label{eq:LPobj}
\max_{g,\boldsymbol{\nu}}~~g
\end{equation}
subject to 
\begin{align}
\begin{split}
\nu (y,z) \le& \frac{c(y)-g}{r_y+\beta(z+a)}+ \frac{r_y}{r_y+\beta(z+a)} \nu (y-1,z+a)
\\
&~~~~ +\frac{\beta(z+a)}{r_y+\beta(z+a)} \nu (y+1,z+a-1),
\\
&~~\mbox{for all}~~ y \ge \underline{y},~z \ge 0, ~a \ge 0,~\mbox{and}~ I_m \le y+z+a \le I_M,
\label{eq:const1}
\end{split}
\end{align}
\begin{align}
\nu (I_m,0)&= 0.
\label{eq:const2}
\end{align}
When solving the LP, we let
\[
I_M=\frac{r}{\beta}+\kappa_M\sqrt{\frac{r}{\beta}},~I_m=\left(\frac{r}{\beta}-\kappa_m\sqrt{\frac{r}{\beta}}\right)^+,~\mbox{and}~
\underline{y}=-\kappa_y\sqrt{\frac{r}{\beta}},
\]
where parameters $\kappa_M,\kappa_m,\kappa_y$ in each case are chosen large enough so that their further increase does not change the optimal value at the precision level of results reported in Table \ref{tbl:optimality}.   

\section{Steady-state bound $\E Q_n < \infty$. }

We can use a standard drift argument. For example, as follows. 
Consider chain $Q_n$ with a fixed initial state $Q_0$. Then $\E Q_n^2 < \infty$ for all $n$.
Denote $\Delta Q_n = Q_{n+1}-Q_n$. Then,
$Q_{n+1}^2- Q_{n}^2 = \Delta Q_n^2 + 2 \Delta Q_n Q_n$ and
$$
\E[Q_{n+1}^2- Q_{n}^2 ~|~ Q_{n}]  \le \gamma^2 + 2  Q_n \E[\Delta Q_n ~|~ Q_{n}].
$$
Note
$$
Q_n \E[\Delta Q_n ~|~ Q_{n}] = -\delta Q_n \mi\{Q_n \ge \gamma\} + Q_n \E[\Delta Q_n ~|~ Q_{n}] \mi\{Q_n < \gamma\} \le
$$
$$
-\delta Q_n \mi\{Q_n \ge \gamma\} + (\gamma-1) Q_n \mi\{Q_n < \gamma\} =
$$
$$
-\delta Q_n  + (\gamma-1+\delta) Q_n \mi\{Q_n < \gamma\}.
$$
We obtain
$$
\E[Q_{n+1}^2- Q_{n}^2]  \le \gamma^2 - 2\delta \E Q_n +2(\gamma-1+\delta)(\gamma-1),
$$
and then
$$
0 \le \limsup_n \E[Q_{n+1}^2- Q_{n}^2] \le  - 2\delta \liminf_n \E Q_n + \gamma^2 +2(\gamma-1+\delta)(\gamma-1),
$$
where the first inequality must hold, because otherwise $\E Q_{n}^2 \to -\infty$ as $n\to\infty$, which is impossible.
Then we have
\beql{eq-mean-finite}
\liminf_n \E Q_n \le [\gamma^2 +2(\gamma-1+\delta)(\gamma-1)]/[2\delta].
\eeql
This, in particular means that chain $Q_n$ is positive recurrent (because, if not, $\lim_n \E Q_n = \infty$ would hold).
Finally, denote by $Q_\infty$ a random value of the Markov chain in stationary regime. Since $Q_n \Rightarrow Q_\infty$,
by Fatou lemma, $\E Q_\infty \le \liminf_n \E Q_n$, which along with \eqn{eq-mean-finite} gives the desired bound
$$
\E Q_\infty \le [\gamma^2 +2(\gamma-1+\delta)(\gamma-1)]/[2\delta].
$$

\end{document}